\newcommand{\banacha}{X}
\newcommand{\banachb}{Y}
\newcommand{\banachh}{H}
\newtheorem{theorem}{Theorem}
\newtheorem{lemma}[theorem]{Lemma}
\newtheorem{definition}{Definition}
\newtheorem{corollary}[theorem]{Corollary}
\newtheorem{proposition}[theorem]{Proposition}
\newtheorem{remark}{Remark}
\begin{document}

%%%%%%%%%%%%%%%%%%%%%%%%%%%%%%%%%%%%%%%%%%%%%%%%%%%%%%%%%%%%%
\title{Local  convergence  of Newton's  method for solving  generalized equations with monotone operator}

\author{Gilson N. Silva \thanks{CCET/UFOB,  CEP 47808-021 - Barreiras, BA, Brazil (Email: {\tt  gilson.silva@ufob.edu.br}).}
}
\date{July 29, 2016}

\maketitle
\maketitle
%%%%%%%%%%%%%%%%%%%%%%%%%%,,,,,,,,,,,,,,,,,,,,,,,,,,,,,,,
\begin{abstract}
%%%%%%%%%%%%%%%%%%%%%%%%%
In this paper we study Newton's method for solving the generalized equation $F(x)+T(x)\ni 0$ in Hilbert spaces, where $F$ is a Fr\'echet differentiable function and $T$ is set-valued and maximal monotone. We show that this method is local  quadratically convergent to a solution. Using the idea of majorant condition on the nonlinear function which is associated to the generalized equation, the convergence of the method,  the optimal convergence radius and results on the convergence rate are established. The advantage of working with a majorant condition rests in the fact that it allow to unify several convergence results pertaining to  Newton's method.
\end{abstract}
\medskip

\noindent
{\bf Keywords:} Generalized equation, Newton's  method, Majorant condition, Banach lemma.

\medskip
\maketitle

%%%%%%%%%%%%%%%%%%%%%%%%%%%%%%%%%%%%%%%%%%%%%%%%%%%%%%%%
\section{Introduction}\label{sec:int}
%%%%%%%%%%%%%%%%%%%%%%%%%%%%%%%%%%%%%%%%%%%%%%%%%%%%%%%%
The idea of solving a generalized equation of the form
\begin{equation} \label{eq:ipi}
\mbox {Find} ~ x ~\mbox{such that} ~  F(x) +T(x) \ni 0,
\end{equation}
where $F:{\Omega}\to \banachh$ is a Fr\'echet differentiable function, $\banachh$ is a Hibert space, $\Omega\subseteq \banachh$ an open set and $T:\banachh \rightrightarrows  \banachh$ is a set-valued and maximal monotone,  plays a huge role in classical analysis and its applications. For instance, systems of nonlinear equations and abstract inequality systems. If $\psi: \banachh \to (-\infty, +\infty]$ is a proper lower semicontinuous convex function and
$$
T(x)=\partial \psi(x) =\{u\in \banachh ~:~ \psi(y) \geq \psi(x) + \langle u,y-x\rangle\}, \qquad \forall\quad y\in \banachh,
$$
then \eqref{eq:ipi} becomes the variational inequality problem
$$
F(x) +\partial \psi(x) \ni 0,
$$
including linear and nonlinear complementary problems; additional comments about such problems can be found in \cite{Dontchev1996, DontchevRockafellar2009, DontchevRockafellar2010, FerreiraSilva, josephy1979, Robinson1972_2,  Uko1996, Wang2015}.

Newton's method has  been extended in order  to solve nonlinear systems of equalities and inequalities (see \cite{Daniel1973} ). In particular,  Robinson in \cite{Robinson1972_2}  generalized Newton's method for solving problems of the form
$$
F(x)\in C,
$$
which becomes  the usual Newton's method  to the special case in which $C$ is the degenerate cone $\{0\}\subset Y$.

A Newton method for solving \eqref{eq:ipi} utilizes the iteration
\begin{equation} \label{eq:ipi1}
  F(x_k) + F'(x_k)(x_{k+1}-x_k)+ F(x_{k+1}) \ni 0, \qquad ~k=0,1,...
\end{equation}
for $x_0$ a given initial point. As is well known, the generalized equation \eqref{eq:ipi} covers huge territory in classical analysis and its applications. When $F\equiv 0$, the iteration \eqref{eq:ipi1} becomes the standard Newton method for solving the nonlinear equation $F(x)=0,$
\begin{equation} \label{eq:ipi2}
  F(x_k) + F'(x_k)(x_{k+1}-x_k)= 0, \qquad  k=0,1,....
\end{equation}

In \cite{chang2015, UkoArgyros2009} under a majorant condition and generalized Lipschitz condition, local and semi local convergence, quadratic rate and estimate of the best possible convergence radius of Newton's method as well as uniqueness of the solution for solving generalized equation were established.

It is well-known that an assumption used to obtain quadratic convergence of Newton's method \eqref{eq:ipi1},   for solving equation \eqref{eq:ipi},  is  the Lipschitz continuity of  $F'$  in a neighborhood of the solution.  Indeed,   keeping control of the derivative is an important point in the convergence analysis of Newton's method.  On the other hand, a couple of papers have dealt with the issue of convergence analysis of the Newton's method,  for solving  the equation $F(x)=0$,   by relaxing the assumption of Lipschitz continuity of $F'$, see for example \cite{FerreiraSvaiter2009, silva2016, Wang1999, Zabrejko1987}. The advantage of working with a majorant condition rests in the fact that it allow to unify several convergence results pertaining to  Newton's method; see  \cite{FerreiraSvaiter2009, Wang1999}. In this paper we work with the majorant  condition introduced in \cite{FerreiraSvaiter2009}. The analysis presented provides a clear relationship between the majorant function and the function defining the generalized equation. Also, it allows us to obtain the optimal convergence radius for  the method with respect to the majorant condition and uniqueness of solution.  The analysis of this method, under Lipschitz's condition and Smale's condition,  are provided  as special case.

The organization of the paper is as follows. In Section~\ref{sec:int.1},  some notations and important results  used  throughout  the paper are presented. In Section \ref{lkant}, the main result is stated and  in   Section~\ref{sec:PMF} properties of the majorant function,  the main  relationships   between the majorant function and the nonlinear operator, the uniqueness of the solution and the optimal convergence radius are established. In Section~\ref{sec:proof}, the main result is proved and in the last section some applications of this result are given.
%%%%%%%%%%%%%%%%%%%%%%%%%%%%%%%%%%%%%%%%%
%%%%%%%%%%%%%%%%%%%%%%%%%%%%%%%%%%%%%%%%%
\section{Preliminaries} \label{sec:int.1}
%%%%%%%%%%%%%%%%%%%%%%%%%%%%%%%%%%%%%%%%%
The following notations and results are used throughout our presentation. Let $\banachh$ be a Hilbert space with scalar product $\langle ., .\rangle$  and norm $\|.\|$, the {\it open} and {\it closed balls} at $x$ with radius $\delta\geq 0$ are denoted, respectively, by $ B(x,\delta)$ and  $B[x,\delta]$.

We denote by ${\mathscr L}(\banacha,\banachb)$ the {\it space consisting of all continuous linear mappings} $A:\banacha \to \banachb$ and   the {\it operator norm}  of $A$ is defined  by $  \|A\|:=\sup \; \{ \|A x\|~:  \|x\| \leqslant 1 \}.$ A bounded linear operator $G:\banachh \to \banachh$ is called a positive operator if $G$ is a self-conjugate and $\langle Gx,x\rangle \geq 0$ for each $x\in \banachh$. The {\it domain} and the {\it range} of $G$ are, respectively,  the sets $ \mbox{dom}~G:=\{x\in \banachh ~: ~ G(x)\neq \varnothing\} $ and $ \mbox{rge}~G:=\{y\in \banachh ~: ~ y \in G(x) ~\emph{for some} ~x\in \banacha\}$. The {\it inverse} of $G$  is the set-valued mapping  $G^{-1}:\banachh \rightrightarrows  \banachh$ defined by $ G^{-1}(y):=\{x \in \banachh ~: ~ y \in G(x)\}$.

Now, we recall notions of monotonicity for set-valued operators.
\begin{definition}\label{def.mono}
Let $T:\banachh \rightrightarrows  \banachh$ be a set-valued operator. $T$ is said to be monotone if for any $x,y\in \mbox{dom}~{T}$ and, $u \in T(y)$, $v\in T(x)$ implies that the following inequality holds:
$$
\langle u-v,y-x \rangle \geq 0 .
$$
\end{definition}
A subset of $\banachh \times \banachh$ is monotone if it is the graph of a monotone operator. If $ \varphi: \banachh \to (-\infty, +\infty]$ is a proper function then the subgradient of $\varphi$ is  monotone.
\begin{definition}
Let $T:\banachh \rightrightarrows  \banachh$ be monotone. Then $T$ is maximal monotone if the following implication holds for all $x,u\in \banachh$:
\begin{equation}
\langle u-v,y-x \rangle \geq 0 \quad \mbox{for each} \quad y\in \emph{dom}{T} \quad \mbox{and}\quad v\in T(y) \Rightarrow \quad x\in \emph{dom}{T} \quad \emph{and} \quad v\in T(x).
\end{equation}
\end{definition}
An example of maximal monotone operator is the subdifferential of a proper, lower semicontinuous, convex function $ \varphi: \banachh \to (-\infty, +\infty]$.
The following result can de found in \cite{Wang2015}.
\begin{lemma}\label{eq:plm}
Let $G$ be a positive operator. The following statements about $G$ hold:
\begin{enumerate}
\item $\|G^2\|=\|G\|^2$;
\item If $G^{-1}$ exists,  then $G^{-1}$ is a positive operator.
\end{enumerate}
\end{lemma}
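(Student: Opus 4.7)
The plan is to exploit self-adjointness, which is built into the definition of ``positive operator'' given in the paper, and then apply two standard Hilbert-space identities: the $C^*$-identity $\|A^*A\|=\|A\|^2$ and the fact that invertibility commutes with taking adjoints.

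For part (1), the inequality $\|G^2\|\le\|G\|^2$ is immediate from submultiplicativity of the operator norm, so the content is the reverse inequality. Since $G$ is self-conjugate, I would estimate, for any $x\in H$ with $\|x\|\le 1$,
\[
\|Gx\|^2 \;=\; \langle Gx,Gx\rangle \;=\; \langle G^2x,x\rangle \;\le\; \|G^2 x\|\,\|x\| \;\le\; \|G^2\|,
\]
using self-adjointness in the second equality and Cauchy--Schwarz in the third. Taking the supremum over the closed unit ball gives $\|G\|^2\le\|G^2\|$, and combined with the easy direction this yields equality.

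For part (2), I would first establish that $G^{-1}$ is self-conjugate. Applying the adjoint operation to the identities $GG^{-1}=G^{-1}G=I$ and using $G^*=G$ gives $(G^{-1})^*G=G(G^{-1})^*=I$, so by uniqueness of the inverse $(G^{-1})^*=G^{-1}$. Next, to check nonnegativity of $\langle G^{-1}y,y\rangle$, I would set $x:=G^{-1}y$ so that $y=Gx$, and compute
\[
\langle G^{-1}y,y\rangle \;=\; \langle x,Gx\rangle \;=\; \langle Gx,x\rangle \;\ge\; 0,
\]
where the middle equality again uses $G=G^*$ and the final inequality is positivity of $G$.

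I do not expect a genuine obstacle here; the only mild subtlety is that the paper's definition of ``positive'' bundles self-adjointness into the hypothesis, so I must remember to invoke that property (rather than merely $\langle Gx,x\rangle\ge 0$) at each step where the switch between $\langle Gx,y\rangle$ and $\langle x,Gy\rangle$ occurs. The boundedness of $G^{-1}$ in part (2) is taken as part of the hypothesis ``$G^{-1}$ exists'' in the sense of the paper's set-up, so I will not need to verify it separately.
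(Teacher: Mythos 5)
The paper does not supply its own proof of this lemma; it simply refers the reader to \cite{Wang2015}, so there is no internal argument to compare against. Your proof is correct and is the standard one: for part (1), the reverse inequality $\|G\|^2\le\|G^2\|$ follows from self-adjointness and Cauchy--Schwarz exactly as you write (this is the $C^*$-identity $\|A^*A\|=\|A\|^2$ specialized to $A=A^*$), and for part (2), taking adjoints of $GG^{-1}=G^{-1}G=I$ gives self-conjugacy of $G^{-1}$, after which the substitution $y=Gx$ reduces nonnegativity of $\langle G^{-1}y,y\rangle$ to that of $\langle Gx,x\rangle$. You are also right to flag that ``$G^{-1}$ exists'' is read here as existence of a bounded two-sided inverse, which is what licenses both the adjoint manipulation and the surjectivity needed for the substitution; no gap remains.
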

As a consequence of this result we have the following result:
\begin{lemma}\label{eq:adjunt}
Let $G$ be a positive operator. Suppose that $G^{-1}$ exists, then for each $x\in \banachh$ we have
$$
\langle Gx,x\rangle \geq \frac{\|x\|^2}{\|G^{-1}\|}.
$$
\end{lemma}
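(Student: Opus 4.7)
The plan is to deduce the lower bound from a Cauchy--Schwarz-type inequality for the positive operator $G^{-1}$ itself, which is legitimate because part~(2) of Lemma~\ref{eq:plm} guarantees that $G^{-1}$ is again positive (in particular self-adjoint). Since the sesquilinear form $(u,v)\mapsto \langle G^{-1}u,v\rangle$ is then positive semidefinite, the usual derivation of Cauchy--Schwarz applies to it, giving
\begin{equation*}
\bigl|\langle G^{-1}u,v\rangle\bigr|^{2}\;\leq\;\langle G^{-1}u,u\rangle\,\langle G^{-1}v,v\rangle,\qquad u,v\in\banachh.
\end{equation*}

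I would then specialize this by choosing $u=Gx$ and $v=x$. The left-hand side collapses nicely: $\langle G^{-1}(Gx),x\rangle=\langle x,x\rangle=\|x\|^{2}$, while the first factor on the right becomes $\langle G^{-1}(Gx),Gx\rangle=\langle x,Gx\rangle$, which by the self-adjointness of $G$ equals $\langle Gx,x\rangle$. The resulting estimate is
\begin{equation*}
\|x\|^{4}\;\leq\;\langle Gx,x\rangle\,\langle G^{-1}x,x\rangle.
\end{equation*}

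To finish, I would bound the second factor on the right by the ordinary Cauchy--Schwarz inequality together with the definition of the operator norm: $\langle G^{-1}x,x\rangle\leq \|G^{-1}x\|\,\|x\|\leq \|G^{-1}\|\,\|x\|^{2}$. Substituting and dividing by $\|G^{-1}\|\,\|x\|^{2}$ (the case $x=0$ being trivial, both sides vanishing) yields the claimed inequality
\begin{equation*}
\langle Gx,x\rangle\;\geq\;\frac{\|x\|^{2}}{\|G^{-1}\|}.
\end{equation*}

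The only nontrivial ingredient is the generalized Cauchy--Schwarz step, which relies crucially on $G^{-1}$ being positive rather than merely bounded invertible; this is precisely what part~(2) of Lemma~\ref{eq:plm} delivers, so the two-part lemma is used in an essential way. Notably, part~(1) of that lemma is not needed here, which is consistent with the statement being phrased as a ``consequence'' that primarily exploits the positivity of the inverse.
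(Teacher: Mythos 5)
Your proof is correct. Note that the paper does not supply an argument for this lemma at all: its ``proof'' consists of the single line ``See Lemma~2.2 of \cite{Uko1996},'' so there is no internal derivation to compare against. Your self-contained argument is a clean and standard way to get the estimate: part~(2) of Lemma~\ref{eq:plm} gives that $G^{-1}$ is positive (in particular self-adjoint), so the form $(u,v)\mapsto\langle G^{-1}u,v\rangle$ is symmetric and positive semidefinite and the generalized Cauchy--Schwarz inequality applies; the substitution $u=Gx$, $v=x$ collapses the left side to $\|x\|^{2}$ and gives $\|x\|^{4}\leq\langle Gx,x\rangle\langle G^{-1}x,x\rangle$; and the ordinary Cauchy--Schwarz bound $\langle G^{-1}x,x\rangle\leq\|G^{-1}\|\,\|x\|^{2}$ finishes it. Your observation that only part~(2) of Lemma~\ref{eq:plm} is used is accurate; an alternative derivation that factors $G=G^{1/2}G^{1/2}$ and writes $\langle Gx,x\rangle=\|G^{1/2}x\|^{2}\geq\|x\|^{2}/\|G^{-1/2}\|^{2}$ would invoke part~(1) (via $\|G^{-1/2}\|^{2}=\|G^{-1}\|$) but also needs the existence of the positive square root, so your route is the more elementary of the two.
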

\begin{proof}
See Lemma~2.2 of \cite{Uko1996}.
\end{proof}

Let $G:\banachh \to \banachh$ be a bounded linear operator. We will use the convention that $\widehat{G}:=\frac{1}{2}(G+G^*)$ where $G^*$ is the conjugate operator of $G$. As we can see, $\widehat{G}$ is a self-conjugate operator.
From now, we assume that $T:\banachh \rightrightarrows  \banachh$ is a set-valued  maximal monotone operator and $F: \banachh \to \banachh$ is a Fr\'echet derivative function. The next result is of major importance to prove the good definition of Newton's method. Its proof can be found in \cite[Lemma~1, p.189]{Smale1986}.
\begin{lemma}[Banach's lemma]\label{eq.banachlemma}
Let $B: \banachh \to \banachh$ be a bounded linear operator and $I:\banachh \to \banachh$ the identity operator. If $\|B-I\|<1$ then $B$ is invertible and $\|B^{-1}\|\leq 1/(1-\|B-I\|)$.
\end{lemma}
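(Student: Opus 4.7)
The plan is to construct $B^{-1}$ explicitly as a Neumann series. Set $A := I - B$, so by hypothesis $r := \|A\| < 1$. Formally one expects $B^{-1} = (I-A)^{-1} = \sum_{n=0}^{\infty} A^n$, and the proof amounts to justifying this identity rigorously.

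First I would define the partial sums $S_N := \sum_{n=0}^{N} A^n \in \mathscr{L}(\banachh,\banachh)$ and show they form a Cauchy sequence. Using submultiplicativity of the operator norm one has $\|A^n\| \leq \|A\|^n = r^n$, so for $M > N$,
\[
\|S_M - S_N\| \leq \sum_{n=N+1}^{M} r^n \leq \frac{r^{N+1}}{1-r}.
\]
Since $r < 1$, this tail tends to $0$, giving the Cauchy property. Because $\banachh$ is a Hilbert (hence Banach) space, $\mathscr{L}(\banachh,\banachh)$ is complete in the operator norm, so there exists $S \in \mathscr{L}(\banachh,\banachh)$ with $S_N \to S$.

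Next I would check that $S$ is indeed a two-sided inverse of $B$. A direct telescoping computation gives
\[
S_N B = S_N (I - A) = \sum_{n=0}^{N} A^n - \sum_{n=1}^{N+1} A^n = I - A^{N+1},
\]
and similarly $B S_N = I - A^{N+1}$. Since $\|A^{N+1}\| \leq r^{N+1} \to 0$, passing to the limit (using continuity of left- and right-multiplication by the fixed operator $B$) yields $SB = BS = I$. Hence $B$ is invertible with $B^{-1} = S$.

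Finally, the norm bound follows by taking norms inside the convergent series:
\[
\|B^{-1}\| = \|S\| \leq \sum_{n=0}^{\infty} \|A\|^n = \frac{1}{1-\|A\|} = \frac{1}{1-\|B-I\|}.
\]
There is no real obstacle here; the only technical point worth stating carefully is the completeness of $\mathscr{L}(\banachh,\banachh)$, which guarantees that the Cauchy sequence of partial sums actually has a limit in the space, so that the formal Neumann series defines a genuine bounded operator.
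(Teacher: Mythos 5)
Your proof is correct and is the standard Neumann series argument for this result; the paper itself does not prove the lemma but simply cites it (\cite[Lemma~1, p.~189]{Smale1986}), and the argument given in such references is precisely the one you wrote out. All the steps check out: the partial sums $S_N=\sum_{n=0}^N (I-B)^n$ are Cauchy by the geometric bound and submultiplicativity, completeness of $\mathscr{L}(\banachh,\banachh)$ yields a limit $S$, the telescoping identities $S_NB=BS_N=I-(I-B)^{N+1}$ pass to the limit to give $SB=BS=I$, and the norm estimate $\|S\|\le\sum_{n\ge 0}\|I-B\|^n=1/(1-\|I-B\|)$ follows from lower semicontinuity of the norm along a convergent sequence. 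So there is nothing to flag, beyond noting that you are supplying a proof where the paper content yourself to a citation.
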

%%%%%%%%%%%%%%%%%%%%%%%%%%%%%%%%%%%%%%%%%%%%%%%%%%%%%
\section{Local analysis of Newton's method } \label{lkant}
%%%%%%%%%%%%%%%%%%%%%%%%%%%%%%%%%%%%%%%%%%%%%%%%%%%%%
In this section,  we study the Newton's method for solving \eqref{eq:ipi}. For study the convergence properties of this method, we assume that  the derivative $F'$ satisfies a weak Lipschitz condition on a region $\Omega$  relaxing  the usual Lipschitz condition. The statement of  the our main result is:
\begin{theorem}\label{th:nt}
Let $\banachh$ be a Hilbert space, $\Omega$ be an open nonempty subset of $\banachh$, $F: \Omega \to \banachh$ be continuous with Fr\'echet derivative $F'$ continuous, $T:\banachh \rightrightarrows  \banachh$ be a set-valued operator and $x^*\in \Omega$. Suppose that  $0 \in F(x^*) +T(x^*)$, $F'(x^*)$ is a positive operator and $\widehat{F'(x^*)}^{-1}$ exists. Let $R>0$  and  $\kappa:=\sup\{t\in [0, R): B(x^*, t)\subset \Omega\}$. Suppose that there exists $f:[0,\; R)\to \mathbb{R}$ twice continuously differentiable such that
  \begin{equation}\label{Hyp:MH}
\|\widehat{F'(x^*)}^{-1}\| \left\|F'(x)-F'(x^*+\tau(x-x^*))\right\| \leq f'\left(\|x-x^*\|\right)-f'\left(\tau\|x-x^*\|\right),
  \end{equation}
  for all $\tau \in [0,1]$, $x\in B(x^*, \kappa)$ and
\begin{itemize}
  \item[{\bf h1)}]  $f(0)=0$ and $f'(0)=-1$;
  \item[{\bf  h2)}]  $f'$ is convex and strictly increasing.
\end{itemize}
  Let  $\nu:=\sup\{t\in [0, R): f'(t)< 0\},$ $\rho:=\sup\{t\in (0, \nu): f(t)/(tf'(t))-1<1\} $  and $r:=\min \left\{\kappa, \,\rho \right\}.$
Then, the sequences with starting point $x_0\in B(x^*, r)/\{x^*\}$ and $t_0=\|x^*-x_0\|$, respectively,
\begin{equation} \label{eq:DNS}
 0\in F(x_k)+F'(x_k)(x_{k+1}-x_k)+T(x_{k+1}), \qquad t_{k+1} =|{t_k}-f(t_k)/f'(t_k)|,\qquad k=0,1,\ldots\,,
\end{equation}
are well defined, $\{t_k\}$ is strictly decreasing, is contained in $(0, r)$ and converges to $0$, $\{x_k\}$ is contained in $B(x^*, r)$ and  converges to the point $x^*$ which is the unique solution of the generalized equation $F(x)+T(x)\ni 0$ in $B(x^*, \bar{\sigma})$, where $\bar{\sigma}=\min \{r, \sigma\}$ and $\sigma:=\sup\{0<t<\kappa: f(t)< 0\}$. Moreover, the sequence $\{t_{k+1}/ t_k^2\}$ is strictly decreasing,
\begin{equation}
    \label{eq:q2}
    \|x^*-x_{k+1}\| \leq \left[t_{k+1}/t_k^2\right]\|x_k-x^*\|^2, \qquad t_{k+1}/t_k^2\leq f''(t_0)/(2 |f'(t_0)|), \qquad k=0,1,\ldots\,.
  \end{equation}
If, additionally,  $f(\rho)/(\rho f'(\rho))-1=1$ and $\rho < \kappa$, then  $r=\rho$ is the optimal convergence radius.
\end{theorem}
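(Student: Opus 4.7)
The plan is to decompose the proof into four layers: (i) basic scalar properties of the majorant function $f$ and the radii $\nu$, $\rho$, $\sigma$, $\kappa$, $r$; (ii) well-definedness of the Newton iteration; (iii) the quadratic error estimate via comparison with the real Newton sequence $\{t_k\}$; and (iv) uniqueness of the solution in $B(x^*,\bar{\sigma})$ together with optimality of the radius. The scalar facts, such as $f(t)<0$ on $(0,\sigma)$, $f'(t)<0$ and $f'(t)+1\in[0,1)$ on $[0,\nu)$, and the monotonicity of $t\mapsto f(t)/(tf'(t))$, follow from $f(0)=0$, $f'(0)=-1$ and the convexity of $f'$; I would collect these in a preliminary subsection before anything else.

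For the well-definedness of $x_{k+1}$ I would combine Banach's lemma with the maximal-monotonicity machinery. Applying the majorant condition \eqref{Hyp:MH} with $\tau=0$ and using $\|\widehat{A}-\widehat{B}\|\le\|A-B\|$ yields, for every $x\in B(x^*,\nu)$,
\[
\bigl\|\widehat{F'(x^*)}^{-1}\widehat{F'(x)}-I\bigr\| \le f'(\|x-x^*\|)+1 < 1.
\]
By Lemma~\ref{eq.banachlemma}, $\widehat{F'(x)}$ is invertible with $\|\widehat{F'(x)}^{-1}\|\le -\|\widehat{F'(x^*)}^{-1}\|/f'(\|x-x^*\|)$, and Lemma~\ref{eq:adjunt} then gives $\langle F'(x)v,v\rangle \ge \|v\|^2/\|\widehat{F'(x)}^{-1}\|$. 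Hence the affine monotone map $v\mapsto F(x_k)+F'(x_k)(v-x_k)$ added to $T$ is maximal monotone (the sum theorem applies trivially because the first summand is continuous and defined on all of $\banachh$) and strongly monotone, which yields a unique solution $x_{k+1}$ of the inclusion.

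For the error estimate, monotonicity of $T$ applied to $-F(x^*)\in T(x^*)$ and $-F(x_k)-F'(x_k)(x_{k+1}-x_k)\in T(x_{k+1})$ gives
\[
\langle F'(x_k)(x_{k+1}-x^*),x_{k+1}-x^*\rangle \le \bigl\langle F(x^*)-F(x_k)+F'(x_k)(x_k-x^*),\,x_{k+1}-x^*\bigr\rangle.
\]
Replacing $F'(x_k)$ by $\widehat{F'(x_k)}$ on the left (the antisymmetric part contributes nothing) and invoking Lemma~\ref{eq:adjunt} bounds the left side below by $\|x_{k+1}-x^*\|^2/\|\widehat{F'(x_k)}^{-1}\|$, while the fundamental theorem of calculus and \eqref{Hyp:MH} bound the right side by $[t_k f'(t_k) - f(t_k)]\|x_{k+1}-x^*\|/\|\widehat{F'(x^*)}^{-1}\|$ with $t_k:=\|x_k-x^*\|$. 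Feeding in the estimate of $\|\widehat{F'(x_k)}^{-1}\|$ from step (ii) collapses these into $\|x_{k+1}-x^*\|\le f(t_k)/f'(t_k)-t_k$, which coincides with the next majorant iterate as long as $t_k<\rho$. An induction then yields $\{x_k\}\subset B(x^*,r)$, and the quadratic rate \eqref{eq:q2} falls out of the integral identity $t_{k+1}/t_k^2 = \bigl(\int_0^1 u\,f''(u t_k)\,du\bigr)/(-f'(t_k))$, whose monotonicity in $t_k$ is a direct consequence of the convexity of $f'$.

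For uniqueness in $B(x^*,\bar{\sigma})$, assume $y\neq x^*$ is another solution; monotonicity of $T$ forces $\langle F(y)-F(x^*),y-x^*\rangle\le 0$. Expanding $F(y)-F(x^*)$ by the fundamental theorem of calculus, isolating $\widehat{F'(x^*)}$ via Lemma~\ref{eq:adjunt} and bounding the perturbation by \eqref{Hyp:MH} produces a lower bound proportional to $-f(\|y-x^*\|)/\|y-x^*\|>0$, contradicting the monotonicity estimate. For optimality under $f(\rho)/(\rho f'(\rho))-1=1$ and $\rho<\kappa$, I would test the scalar model $\banachh=\mathbb{R}$, $F=f$, $T\equiv\{0\}$, $x^*=0$: the hypothesis rewrites as $|\rho-f(\rho)/f'(\rho)|=\rho$, so starting from $x_0=\rho$ the iterate satisfies $|x_1|=\rho$ and Newton's method cannot converge, ruling out any radius strictly larger than $\rho$. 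The main obstacle I foresee is the well-definedness step, where one must simultaneously verify maximal monotonicity and strong monotonicity of $F'(x_k)+T$; everything downstream is careful bookkeeping built on the two estimates from Lemma~\ref{eq:adjunt} and the majorant condition.
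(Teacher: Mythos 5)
Your overall architecture follows the paper's closely: majorant scalar facts, operator invertibility via Banach's lemma, a monotonicity-based error estimate, uniqueness by contradiction, and optimality by a scalar counterexample. However, there is a genuine gap in the well-definedness step which propagates into the error estimate. After invoking Banach's lemma you conclude that $\widehat{F'(x)}$ is invertible and then immediately apply Lemma~\ref{eq:adjunt} to get $\langle F'(x)v,v\rangle \ge \|v\|^2/\|\widehat{F'(x)}^{-1}\|$. But Lemma~\ref{eq:adjunt} requires $\widehat{F'(x)}$ to be a \emph{positive} operator, not merely an invertible self-adjoint one; an invertible self-adjoint operator can have negative spectrum, in which case the inequality fails outright. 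Banach's lemma gives you invertibility only, so positivity must be argued separately. The paper's Lemma~\ref{le:wdns} closes this: from \eqref{Hyp:MH} one has $\|\widehat{F'(x)}-\widehat{F'(x^*)}\| \le 1/\|\widehat{F'(x^*)}^{-1}\|$, and since $\langle \widehat{F'(x^*)}y,y\rangle \ge \|y\|^2/\|\widehat{F'(x^*)}^{-1}\|$ holds at $x^*$ (where positivity is a hypothesis), it follows that $\langle \widehat{F'(x)}y,y\rangle \ge \langle \widehat{F'(x^*)}y,y\rangle - \|\widehat{F'(x)}-\widehat{F'(x^*)}\|\,\|y\|^2 \ge 0$. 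You need a step of this kind before citing Lemma~\ref{eq:adjunt} at $x$, both for strong monotonicity of $F'(x_k)+T$ and for the lower bound used in the error estimate.

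Two smaller issues. In the error-estimate layer you set $t_k := \|x_k-x^*\|$, conflating the actual error with the majorant iterate $t_k$ of the theorem statement; the clean version first proves $\|N_{F+T}(x)-x^*\| \le [\,|n_f(t)|/t^2\,]\,\|x-x^*\|^2$ for any $t\ge\|x-x^*\|$ and then couples the two sequences inductively, as Lemma~\ref{le:cl} does. In the optimality counterexample, the choice $F=f$ gives $F'(0)=f'(0)=-1$, which is not a positive operator and thus violates the theorem's standing hypothesis; the usual fix is $F=-f$ (extended oddly about $0$), for which $F'(0)=1$ and the scalar Newton recursion is unchanged, so the oscillation $x_0=\rho \mapsto x_1=-\rho$ still witnesses non-convergence.
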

\begin{remark}
 Combining inequalities  in \eqref{eq:q2}, we obtain that  $\{x_k\}$ converges  $Q$-quadratically to $\bar{x}$. Moreover, as $\{t_{k+1}/ t_k^2\}$ is strictly decreasing we have  $t_{k+1}/ t_k^2<t_1/t_0^2$, for  $k=0, 1, \ldots.$  Thus, first inequality in \eqref{eq:q2} implies $\|\bar{x}-x_{k+1}\| \leq \left[t_{1}/t_0^2\right]\|x_k-\bar{x}\|^2,$
for $ k=0,1,\ldots\, $. As a consequence,
$$
\|\bar{x}-x_{k}\| \leq t_0\left(t_1/t_0\right)^{2^k-1}, \qquad k=0,1,\ldots\,.
$$
\end{remark}
\begin{remark}\label{def.good}
Since $T$ is monotone maximal, if there exists a constant $c>0$ such that
\begin{equation}\label{eq.gooddef}
\langle F'(x_k)y,y\rangle \geq c\|y\|^2
\end{equation}
for each $y\in \banachh$, then there exists a unique point $x_{k+1}$ such that the first inclusion in \eqref{eq:DNS} holds. The proof of this result can be found in \cite[Lemma~2.2]{Uko1996}. Hence, if for each $k$, there exists a constant $c>0$ such that \eqref{eq.gooddef} holds, then the sequence generated by \eqref{eq:DNS} is well defined.
\end{remark}
{\it From now on, we  assume that the hypotheses of Theorem \ref{th:nt} hold}.
%%%%%%%%%%%%%%%%%%%%%%%%%%%%%%%%%%%%%%%%%%%%%%%%%%%%%%%%%%%%%%%%%%%%%%%%%%
\subsection{Basic results} \label{sec:PMF}
%%%%%%%%%%%%%%%%%%%%%%%%%%%%%%%%%%%%%%%%%%%%%%%%%%%%%%%%%%%%%%%%%%%%%%%%%%
In this section, we establish   some  relationships between the majorant function $f$ and the set-valued mapping  $F+T$. The Proposition~2.5 of \cite{Ferreira2009} state that   the constants $\kappa$,  $\nu$ and $\sigma$ are all positive and $t-f(t)/f'(t)<0,$ for all $t\in (0,\,\nu).$  According to {\bf h2} and  definition of $\nu$, we have  $f'(t)< 0$ for all
$t\in[0, \,\nu)$.  Therefore, the Newton iteration map for $f$ is well defined in
$[0,\, \nu)$, namely, $ n_{f}:[0,\, \nu)\to (-\infty, \, 0]$  is defined by
\begin{equation} \label{eq:def.nf}
n_{f}(t):=t-f(t)/f'(t), \qquad  t\in [0,\, \nu).
\end{equation}
%%%%%%%%%%%%
The next proposition was proved in Proposition~2.6 and Proposition~2.7 of \cite{Ferreira2009}.
\begin{proposition}  \label{pr:incr2}
The mapping  $(0,\, \nu) \ni t \mapsto |n_{f}(t)|/t^2$  is  strictly increasing and
$$
|n_{f}(t)|/t^2\leq f''(t)/(2|f'(t)|),
$$
for  all $t\in (0,\, \nu).$ Moreover, the constant $ \rho $ is positive. As a consequence,  $|n_{f}(t)|<t$ for all $ t\in (0, \, \rho)$.
\end{proposition}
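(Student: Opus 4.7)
The plan is to reduce everything to a single integral representation of $n_f$. Starting from $f(0)=0$, Fubini/integration by parts gives
$$
tf'(t)-f(t)=\int_0^t\bigl(f'(t)-f'(s)\bigr)\,ds=\int_0^t u\,f''(u)\,du,
$$
and the substitution $u=\tau t$ yields
$$
\frac{|n_f(t)|}{t^2}=\frac{tf'(t)-f(t)}{t^2|f'(t)|}=\frac{1}{|f'(t)|}\int_0^1\tau\, f''(\tau t)\,d\tau.
$$
This formula is the workhorse: the numerator depends monotonically on $t$ through $f''$, and the denominator is governed directly by the definition of $\nu$.

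For the strict monotonicity of $t\mapsto |n_f(t)|/t^2$ on $(0,\nu)$, I would argue as follows. Hypothesis \textbf{h2} makes $f'$ convex, equivalently $f''$ nondecreasing; moreover $f''$ cannot vanish on any subinterval, since $f'$ is strictly increasing. Hence for each fixed $\tau\in(0,1]$ the map $t\mapsto f''(\tau t)$ is nondecreasing, so $N(t):=\int_0^1\tau f''(\tau t)\,d\tau$ is nondecreasing and strictly positive on $(0,\nu)$. On the other hand, on $(0,\nu)$ we have $f'(t)<0$ with $f'$ strictly increasing, so $D(t):=|f'(t)|$ is strictly decreasing and positive. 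The ratio of a positive nondecreasing function to a strictly decreasing positive function is strictly increasing, which gives the claim.

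The upper bound is immediate from the same representation: $f''(\tau t)\le f''(t)$ for $\tau\in[0,1]$ by monotonicity of $f''$, hence
$$
\frac{|n_f(t)|}{t^2}=\frac{1}{|f'(t)|}\int_0^1\tau f''(\tau t)\,d\tau\le\frac{f''(t)}{|f'(t)|}\int_0^1\tau\,d\tau=\frac{f''(t)}{2|f'(t)|}.
$$
For $\rho>0$, I would use the identity $f(t)/(tf'(t))-1=|n_f(t)|/t=t\cdot|n_f(t)|/t^2$ (which follows directly from \eqref{eq:def.nf} and the fact that $n_f(t)<0$). Combined with the bound just established, this gives $|n_f(t)|/t\le t f''(t)/(2|f'(t)|)$, and since $f'(0)=-1$ and $f''$ is continuous this tends to $0$ as $t\to 0^+$. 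Thus $|n_f(t)|/t<1$ on some interval $(0,\delta)$, proving $\rho\ge\delta>0$.

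The final consequence $|n_f(t)|<t$ on $(0,\rho)$ follows because $t\mapsto|n_f(t)|/t=t\cdot(|n_f(t)|/t^2)$ is the product of two strictly increasing positive functions on $(0,\nu)$, hence strictly increasing and continuous; so $\{t\in(0,\nu):|n_f(t)|/t<1\}$ is an interval of the form $(0,\rho)$, and on this interval $|n_f(t)|<t$ as required. The only point requiring genuine care is the strict positivity of $N(t)$: I would have to note explicitly that $f'$ strictly increasing rules out $f''\equiv 0$ on any initial subinterval, since this is what upgrades ``nondecreasing numerator, strictly decreasing denominator'' to strict monotonicity of the quotient.
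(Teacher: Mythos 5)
Your proof is correct. Note that the paper itself gives no proof of this proposition --- it simply cites Propositions~2.6 and~2.7 of Ferreira~(2009) --- so there is no in-paper argument to compare against; your route (Fubini plus the substitution $u=\tau t$ to get $|n_f(t)|/t^2 = \frac{1}{|f'(t)|}\int_0^1 \tau f''(\tau t)\,d\tau$, then the nondecreasing-numerator/strictly-decreasing-positive-denominator quotient observation, with the needed side remark that $f''>0$ on $(0,\nu)$ follows from $f''$ being nondecreasing together with strict monotonicity of $f'$) is the standard argument in this line of work and establishes everything asserted.
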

%%%%%%%%%%%%%%%%%%%%%%%%%%%%%%%%%%%%%%%
Using \eqref{eq:def.nf}, it is easy to see that the sequence $\{t_k \}$ is equivalently defined as
\begin{equation} \label{eq:tknk}
 t_0=\|x^*-x_0\|, \qquad t_{k+1}=|n_{f}(t_k)|, \qquad k=0,1,\ldots\, .
\end{equation}
Next result contain the  main convergence  properties of the above sequence and its prove is similar to  Corollary~2.8 of \cite{Ferreira2009}.
\begin{corollary} \label{cr:kanttk}
The sequence $\{t_k\}$ is well defined, is strictly decreasing and is contained in $(0, \rho)$. Moreover, $\{t_{k+1}/ t_k^2\}$ is strictly decreasing, $\{t_k\}$ converges to $0$ and $t_{k+1}/t_k^2\leq [f''(t_0)/(2|f'(t_0)|)],$ for $k=0,1, \ldots.$
\end{corollary}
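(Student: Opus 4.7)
The plan is to run an induction on $k$, using Proposition~\ref{pr:incr2} as the workhorse, and then deduce convergence to $0$ from a standard monotone-bounded fixed-point argument. Everything is really about iterating the scalar map $n_f$, and the geometry is already encoded in the conclusion of Proposition~\ref{pr:incr2}, so no genuinely new analytic input should be needed.

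For the base case I would note that $x_0 \in B(x^*, r)\setminus\{x^*\}$ forces $t_0 = \|x^* - x_0\| \in (0, r)$, and since $r = \min\{\kappa, \rho\} \leq \rho$, we have $t_0 \in (0, \rho) \subset (0,\nu)$, so in particular the Newton iteration map $n_f$ is defined at $t_0$. For the inductive step, suppose $t_k \in (0, \rho)$. Proposition~\ref{pr:incr2} gives $|n_f(t_k)| < t_k$, hence $t_{k+1} = |n_f(t_k)| < t_k < \rho$, which yields both the upper bound and the strict decrease. Strict positivity of $t_{k+1}$ is the only mildly delicate point: writing $|n_f(t_k)| = (f(t_k) - t_k f'(t_k))/f'(t_k)$, the identity $f(t_k) - t_k f'(t_k) = \int_0^{t_k} (f'(s) - f'(t_k))\,ds$ is strictly negative because $f'$ is strictly increasing by \textbf{h2}, and $f'(t_k) < 0$ since $t_k \in (0,\nu)$; so $t_{k+1} > 0$. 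This closes the induction and simultaneously proves items (well-defined), (strictly decreasing), and (contained in $(0,\rho)$).

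For the quotient sequence, using the reformulation $t_{k+1}/t_k^2 = |n_f(t_k)|/t_k^2$, strict monotonicity of $t \mapsto |n_f(t)|/t^2$ on $(0,\nu)$ from Proposition~\ref{pr:incr2}, together with $t_{k+1} < t_k$ already proved, gives
\[
\frac{t_{k+2}}{t_{k+1}^2} = \frac{|n_f(t_{k+1})|}{t_{k+1}^2} < \frac{|n_f(t_k)|}{t_k^2} = \frac{t_{k+1}}{t_k^2},
\]
so $\{t_{k+1}/t_k^2\}$ is strictly decreasing. Iterating this bound down to $k=0$ and then applying the pointwise estimate $|n_f(t_0)|/t_0^2 \leq f''(t_0)/(2|f'(t_0)|)$ from Proposition~\ref{pr:incr2} yields the desired inequality $t_{k+1}/t_k^2 \leq f''(t_0)/(2|f'(t_0)|)$.

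Finally, for convergence of $\{t_k\}$ to $0$: the sequence is strictly decreasing and bounded below by $0$, so it converges to some $t^* \in [0, \rho)$. If $t^* > 0$, continuity of $n_f$ on $[0,\nu)$ and the recursion $t_{k+1} = |n_f(t_k)|$ force $t^* = |n_f(t^*)|$; but Proposition~\ref{pr:incr2} says $|n_f(t)| < t$ on $(0,\rho)$, a contradiction. Hence $t^* = 0$. I expect the only subtle step to be the verification that $t_{k+1}>0$ (i.e., that $n_f(t_k)\neq 0$), which is why I would make the integral-representation argument for $f(t) - tf'(t) < 0$ explicit; everything else is an immediate transcription of Proposition~\ref{pr:incr2}.
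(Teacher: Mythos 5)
Your proof is correct and follows essentially the same inductive, monotone fixed-point route that the paper delegates to Corollary~2.8 of \cite{Ferreira2009} and reproduces inside the proof of Theorem~\ref{th:nt} for the sequence $\{\|x_k-x^*\|\}$. The only redundancy is your integral argument for $t_{k+1}>0$: the text already records (from Proposition~2.5 of \cite{Ferreira2009}) that $n_f(t)=t-f(t)/f'(t)<0$ on $(0,\nu)$, so $t_{k+1}=|n_f(t_k)|=-n_f(t_k)>0$ is immediate.
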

In the sequel we will  prove that the partial linearization of $F+T$ has a single-valued inverse, which is Lipschitz  in a  neighborhood of $x^*$. Since  Newton's iteration at a point in this neighborhood  happens to be a zero of the partial linearization of $F+T$ at such a point, it will be first  convenient to study the  {\it linearization error   of  $F$} at a point
in $\Omega$
\begin{equation}\label{eq:def.er}
  E_F(x,y):= F(y)-\left[ F(x)+F'(x)(y-x)\right],\qquad y,\, x\in \Omega.
\end{equation}
In the next result we  bound this error by the linearization error  of the majorant function $f$, namely,
$$
 e_{f}(t,u):=f(u)-\left[f(t)+f'(t)(u-t)\right],\qquad t,\,u \in [0,R).
$$
%%%%%%%%%%%%
\begin{lemma}  \label{pr:taylor}
%%%%%%%%%%%%
There holds $\|\widehat{F'(x^*)}^{-1}\| \|E_F(x, x^*)\|\leq e_{f}(\|x^*-x\|, 0),$ for all $x\in B(x^*, \kappa)$.
\end{lemma}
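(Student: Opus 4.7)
The strategy is to use the fundamental theorem of calculus to express the linearization error $E_F(x,x^*)$ as an integral of $F'$-differences, apply the majorant bound \eqref{Hyp:MH} pointwise under the integral, and then recognize the resulting one-dimensional integral as precisely $e_f(\|x^*-x\|,0)$.

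First I would fix $x \in B(x^*,\kappa)$ and write $\phi(\tau) = F(x^* + \tau(x-x^*))$ for $\tau \in [0,1]$. Since $F$ is Fr\'echet differentiable, $\phi'(\tau) = F'(x^* + \tau(x-x^*))(x - x^*)$, and integrating from $0$ to $1$ gives
$$F(x) - F(x^*) = \int_0^1 F'(x^* + \tau(x-x^*))(x - x^*)\, d\tau.$$
Substituting this into the definition \eqref{eq:def.er} of $E_F(x,x^*)$ and combining the $F'(x)(x-x^*)$ term inside the integral yields
$$E_F(x,x^*) = \int_0^1 \bigl[F'(x) - F'(x^* + \tau(x-x^*))\bigr](x - x^*)\, d\tau.$$

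Next I would take norms, multiply through by $\|\widehat{F'(x^*)}^{-1}\|$, and invoke the majorant condition \eqref{Hyp:MH} to bound the integrand:
$$\|\widehat{F'(x^*)}^{-1}\|\,\|E_F(x,x^*)\| \leq \int_0^1 \bigl[f'(\|x-x^*\|) - f'(\tau\|x-x^*\|)\bigr]\,\|x-x^*\|\, d\tau.$$
Setting $t := \|x-x^*\|$, the right-hand side equals $t f'(t) - \int_0^1 t f'(\tau t)\, d\tau$. A change of variable $s = \tau t$ turns the remaining integral into $\int_0^t f'(s)\, ds = f(t) - f(0) = f(t)$ using \textbf{h1}. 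Hence the right-hand side is $t f'(t) - f(t)$, which is exactly $e_f(t,0) = f(0) - [f(t) + f'(t)(0-t)]$.

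There is no real obstacle here; the argument is a standard integral-remainder computation. The only point requiring a bit of care is lining up the parametrization $x^* + \tau(x-x^*)$ with the one appearing in the majorant hypothesis \eqref{Hyp:MH} (so that $\tau=1$ corresponds to $x$ and $\tau=0$ to $x^*$) and using $f(0)=0$ at the end to collapse the primitive of $f'$ into $f$.
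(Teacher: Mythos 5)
Your proof is correct and follows essentially the same route as the paper: represent the linearization error $E_F(x,x^*)$ as $\int_0^1 [F'(x)-F'(x^*+\tau(x-x^*))](x-x^*)\,d\tau$, bound the integrand via the majorant hypothesis \eqref{Hyp:MH}, and integrate to obtain $\|x-x^*\|f'(\|x-x^*\|)-f(\|x-x^*\|)=e_f(\|x-x^*\|,0)$ using $f(0)=0$. The only cosmetic difference is that the paper parametrizes the segment as $x^*+(1-u)(x-x^*)$ and then substitutes $\tau=1-u$, whereas you use $\tau$ directly; the two are identical after re-indexing.
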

\begin{proof}
Since $x^*+(1-u)(x-x^*)\in B(x^*, \kappa)$, for all $0\leq u\leq 1$ and $F$ is continuously differentiable in $\Omega$, thus the definition of $E_F$ and some simple manipulations yield
$$
\|\widehat{F'(x^*)}^{-1}\|\|E_F(x,x^*)\|\leq \int_0 ^1 \|\widehat{F'(x^*)}^{-1}\| \left \| F'(x)-F'(x^*+(1-u)(x-x^*))]\right\|\,\left\|x^*-x\right\| \; du.
$$
Combining last inequality with \eqref{Hyp:MH} with $\tau =1-u$ and then performing the integral obtained using that $f(0)=0$ we obtain that
\begin{eqnarray*}
\|\widehat{F'(x^*)}^{-1}\|\|E_F(x,x^*)\| &\leq& \int_0 ^1 [f'(\|x^*-x\|)-f'((1-u)\|x^*-x\|)]\|x^*-x\| \; du\\
                                         &=& f'(\|x^*-x\|)\|x^*-x\|- f(\|x^*-x\|).
\end{eqnarray*}
Therefore using $h_1$ and the definition of $e_{f}$ the statement follows.
\end{proof}
In the next result  we will present the main relationships between the majorant function $f$ and the operator $F$. The  result is a consequence of Banach's lemma and its statement  is:
\begin{lemma} \label{le:wdns}
Let $x^*\in \banachh$ be such that $\widehat{F'(x^*)}$ is a positive operator and $\widehat{F'(x^*)}^{-1}$ exists. If $\|x-x^*\|\leq \min\{\kappa, \nu\}$, then $\widehat{F'(x)}$ is a positive operator and $\widehat{F'(x)}^{-1}$ exists. Moreover,
$$
\|\widehat{F'(x)}^{-1}\|\leq  \frac{\|\widehat{F'(x^*)}^{-1}\|}{|f'(\|x-x^*\|)|}.
$$
\end{lemma}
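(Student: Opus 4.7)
The plan is to apply Banach's lemma (Lemma \ref{eq.banachlemma}) to the operator $B:=\widehat{F'(x^*)}^{-1}\,\widehat{F'(x)}$ to obtain invertibility and the desired norm bound, and to verify positivity of $\widehat{F'(x)}$ using Lemma \ref{eq:adjunt} together with a perturbation argument. The restriction $\|x-x^*\|<\min\{\kappa,\nu\}$ is what guarantees both that the majorant hypothesis \eqref{Hyp:MH} applies and that $f'(\|x-x^*\|)<0$.

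First, I would instantiate the majorant condition \eqref{Hyp:MH} at $\tau=0$ to get
\[
\|\widehat{F'(x^*)}^{-1}\|\,\|F'(x)-F'(x^*)\|\;\leq\; f'(\|x-x^*\|)-f'(0)\;=\;f'(\|x-x^*\|)+1\;=\;1-|f'(\|x-x^*\|)|,
\]
where the last equality uses \textbf{h1} ($f'(0)=-1$) and the fact that $f'<0$ on $[0,\nu)$ (which follows from \textbf{h2} and the definition of $\nu$). Since the symmetrization $A\mapsto \widehat{A}=\tfrac{1}{2}(A+A^*)$ is linear with $\|\widehat{A}\|\leq\|A\|$, the same bound holds with $F'$ replaced by $\widehat{F'}$, and hence
\[
\bigl\|\widehat{F'(x^*)}^{-1}\bigl(\widehat{F'(x)}-\widehat{F'(x^*)}\bigr)\bigr\|\;\leq\;1-|f'(\|x-x^*\|)|\;<\;1.
\]
Writing $B=I+\widehat{F'(x^*)}^{-1}(\widehat{F'(x)}-\widehat{F'(x^*)})=\widehat{F'(x^*)}^{-1}\widehat{F'(x)}$, Banach's lemma yields that $B$ is invertible with $\|B^{-1}\|\leq 1/|f'(\|x-x^*\|)|$; multiplying on the right by $\widehat{F'(x^*)}$ gives invertibility of $\widehat{F'(x)}$ and the stated estimate for $\|\widehat{F'(x)}^{-1}\|$.

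It remains to verify that $\widehat{F'(x)}$ is positive. Self-adjointness is immediate from the definition of the symmetrization, so only non-negativity of the quadratic form must be checked. For any $y\in\banachh$, I would decompose
\[
\langle \widehat{F'(x)}y,y\rangle\;=\;\langle \widehat{F'(x^*)}y,y\rangle+\langle (\widehat{F'(x)}-\widehat{F'(x^*)})y,y\rangle,
\]
bound the first term below by $\|y\|^2/\|\widehat{F'(x^*)}^{-1}\|$ via Lemma \ref{eq:adjunt}, and bound the second term in absolute value by $\|\widehat{F'(x)}-\widehat{F'(x^*)}\|\,\|y\|^2\leq(1-|f'(\|x-x^*\|)|)\,\|y\|^2/\|\widehat{F'(x^*)}^{-1}\|$ by the estimate just obtained. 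The sum is then at least $|f'(\|x-x^*\|)|\,\|y\|^2/\|\widehat{F'(x^*)}^{-1}\|\geq 0$, giving positivity.

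The only non-routine point is keeping track of the sign conventions: one must notice that $f'$ is negative on $(0,\nu)$ so that $f'(\|x-x^*\|)+1$ is exactly $1-|f'(\|x-x^*\|)|\in[0,1)$, which is what makes Banach's lemma applicable and simultaneously produces the sharp denominator $|f'(\|x-x^*\|)|$ in the final bound.
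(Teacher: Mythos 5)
Your proof is correct and follows essentially the same route as the paper: apply the majorant condition at $\tau=0$ together with $\|\widehat{A}\|\le\|A\|$ to bound $\|\widehat{F'(x^*)}^{-1}\|\,\|\widehat{F'(x)}-\widehat{F'(x^*)}\|$ strictly below $1$, invoke Banach's lemma (Lemma~\ref{eq.banachlemma}) for invertibility and the norm estimate, and then establish positivity by splitting $\langle\widehat{F'(x)}y,y\rangle$ and combining the perturbation bound with Lemma~\ref{eq:adjunt}. The only differences are cosmetic: you keep the sharper factor $1-|f'(\|x-x^*\|)|$ in the positivity step where the paper simply uses the cruder bound $1$, and you should say ``multiplying on the \emph{left} by $\widehat{F'(x^*)}$'' (since $B=\widehat{F'(x^*)}^{-1}\widehat{F'(x)}$), but neither affects correctness.
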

\begin{proof}
Firstly note that
\begin{equation}\label{eq.matriz}
\|\widehat{F'(x)}-\widehat{F'(x^*)}\|\leq \frac{1}{2}\|F'(x)-F'(x^*)\| + \frac{1}{2}\|(F'(x)-F'(x^*))^*\|=\|F'(x)-F'(x^*)\|.
\end{equation}
Take $x\in B(x^*, r)$. Since $ r<\nu$ we have $\| x-x^*\|<\nu$. Thus, $f'(\|x-x^*\|)<0$ which, together \eqref{Hyp:MH} and {\bf h1}, taking into account \eqref{eq.matriz}, imply that for all $x\in B(x^*, r)$
  \begin{equation}\label{eq:majcond}
    \|\widehat{F'(x^*)}^{-1}\| \|\widehat{F'(x)}-\widehat{F'(x^*)}\|\leq \|\widehat{F'(x^*)}^{-1}\| \|F'(x)-F'(x^*)\| \leq f'(\|x-x^*\|)-f'(0)<1.
  \end{equation}
	Thus, by Banach's lemma, we conclude that $\widehat{F'(x)}^{-1}$ exists. Moreover by above inequality,
	$$
	\|\widehat{F'(x)}^{-1}\|\leq \frac{\|\widehat{F'(x^*)}^{-1}\|}{1-\|\widehat{F'(x^*)}^{-1}\|\|F'(x)-F'(x^*)\|}\leq \frac{\|\widehat{F'(x^*)}^{-1}\|}{1-(f'(\|x-x^*\|)-f'(0))} =\frac{\|\widehat{F'(x^*)}^{-1}\|}{|f'(\|x-x^*\|)|}.
	$$
The last result follows by noting that $r=\min\{\kappa, \nu\}$. On the other hand, using \eqref{eq:majcond} we have
\begin{equation}\label{eq.selfadj}
\|\widehat{F'(x)}-\widehat{F'(x^*)}\|\leq \frac{1}{\|\widehat{F'(x^*)}^{-1}\|}.
\end{equation}
Take $y\in \banachh$. Then, it follows by above inequality that
$$
\langle (\widehat{F'(x^*)} -\widehat{F'(x)})y,y\rangle \leq \|\widehat{F'(x^*)} -\widehat{F'(x)}\|\|y\|^2\leq \frac{\|y\|^2}{\|\widehat{F'(x^*)}^{-1}\|},
$$
which implies, after of simple manipulations that
$$
\langle \widehat{F'(x^*)}y,y\rangle -\frac{\|y\|^2}{\|\widehat{F'(x^*)}^{-1}\|} \leq \langle \widehat{F'(x)}y,y\rangle.
$$
Since $\widehat{F'(x^*)}$ is a positive operator and $\widehat{F'(x^*)}^{-1}$ exists by assumption, we obtain by Lemma~\ref{eq:adjunt} that
$$
\langle \widehat{F'(x^*)}y,y\rangle \geq \frac{\|y\|^2}{\|\widehat{F'(x^*)}^{-1}\|}.
$$
Therefore, combining the two last inequalities we conclude that $\langle \widehat{F'(x)}y,y\rangle \geq 0$, i.e., $\widehat{F'(x)}$ is a positive operator.
\end{proof}
Lemma~\ref{le:wdns} shows that $\widehat{F'(x)}$ is a positive operator and $\widehat{F'(x)}^{-1}$ exists, thus by Lemma~\ref{eq:adjunt} we have that for any $y\in \banachh$
$$
\langle \widehat{F'(x)}y,y\rangle \geq \frac{\|y\|^2}{\|\widehat{F'(x)}^{-1}\|}.
$$
Note that $\langle \widehat{F'(x)}y,y\rangle=\langle F'(x)y,y\rangle$, thus by the second part of Lemma~\ref{le:wdns} and $h_2$ we conclude that $F'(x)$ satisfies \eqref{eq.gooddef} and consequently, the Newton iteration mapping  is well-defined.  Let us call $N_{F+T}$, the Newton iteration mapping  for $F+T$ in that region, namely, $N_{F+T}:B(x^*, r) \to \banachh$ is defined by
\begin{equation} \label{eq:NFef}
0\in F(x)+F'(x)(N_{F+T}(x)-x)+T(N_{F+T}(x)),\qquad \forall ~x\in  B(x^*, r).
\end{equation}
Therefore, one can apply a \emph{single} Newton iteration on any $x\in B(x^*, r)$ to obtain $N_{F+T}(x)$ which may not belong
to $B(x^*, r)$, or even may not belong to the domain of $F$. Thus, this is enough to guarantee the  well-definedness of only one iteration of Newton's method. To ensure that Newtonian iterations may be repeated indefinitely, we need an additional result.
\begin{lemma} \label{le:cl}
Take $0<t<r$. If $\|x-x^*\|\leq t$ then $\|N_{F+T}(x)-x^*\|\leq [ |n_{f}(t)|/t^2]\,\|x-x^*\|^2.$  As a consequence,
$N_{F+T}(B[x^*, t] ) \subset B[x^*, |n_{f}(t)|].$
Moreover, $N_{F+T}(B(x^*, r))\subset B(x^*, r).$
\end{lemma}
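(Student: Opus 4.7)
The plan is to fix $x\in B(x^*,r)$, set $y:=N_{F+T}(x)$, and use monotonicity of $T$ together with the zero inclusion at $x^*$ to get an inequality in which the left-hand side behaves like $\langle F'(x)(y-x^*),y-x^*\rangle$ and the right-hand side is controlled by the linearization error $E_F(x,x^*)$. From the defining inclusion \eqref{eq:NFef} one reads off $-F(x)-F'(x)(y-x)\in T(y)$, and the hypothesis $0\in F(x^*)+T(x^*)$ gives $-F(x^*)\in T(x^*)$. Monotonicity of $T$ applied to these two pairs yields
\[
\langle F(x^*)-F(x)-F'(x)(y-x),\,y-x^*\rangle \geq 0.
\]
Splitting $y-x=(y-x^*)+(x^*-x)$ and rearranging then gives
\[
\langle F'(x)(y-x^*),\,y-x^*\rangle \leq \langle E_F(x,x^*),\,y-x^*\rangle,
\]
with $E_F$ as in \eqref{eq:def.er}.

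Next, I would replace $F'(x)$ by $\widehat{F'(x)}$ on the left (the antisymmetric part contributes zero to the quadratic form), invoke Lemma~\ref{le:wdns} to know $\widehat{F'(x)}$ is positive with invertible inverse (this is where $\|x-x^*\|\leq t<r\leq\min\{\kappa,\nu\}$ is used), and apply Lemma~\ref{eq:adjunt} to bound the left side below by $\|y-x^*\|^2/\|\widehat{F'(x)}^{-1}\|$. Cauchy--Schwarz on the right side and cancellation of one factor of $\|y-x^*\|$ produce
\[
\|y-x^*\|\leq \|\widehat{F'(x)}^{-1}\|\,\|E_F(x,x^*)\|.
\]
Plugging in the bound from Lemma~\ref{le:wdns} and then Lemma~\ref{pr:taylor} turns the right-hand side into $e_f(\|x-x^*\|,0)/|f'(\|x-x^*\|)|$. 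A direct computation using \textbf{h1} and the definition \eqref{eq:def.nf} shows $e_f(s,0)=sf'(s)-f(s)=f'(s)n_f(s)$, and since $f'(s)<0$ and $n_f(s)\leq 0$ on $(0,\nu)$ this equals $|f'(s)|\,|n_f(s)|$. Therefore
\[
\|N_{F+T}(x)-x^*\|\leq |n_f(\|x-x^*\|)|.
\]

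To finish, I would use Proposition~\ref{pr:incr2}: the map $s\mapsto |n_f(s)|/s^2$ is strictly increasing on $(0,\nu)$, so if $\|x-x^*\|\leq t$ then $|n_f(\|x-x^*\|)|\leq [|n_f(t)|/t^2]\,\|x-x^*\|^2$, which is the claimed quadratic estimate. Taking $\|x-x^*\|=t$ immediately yields the inclusion $N_{F+T}(B[x^*,t])\subset B[x^*,|n_f(t)|]$. Finally, for $x\in B(x^*,r)$ with $x\neq x^*$ we set $t=\|x-x^*\|<r\leq\rho$; Proposition~\ref{pr:incr2} gives $|n_f(t)|<t<r$, so $N_{F+T}(x)\in B(x^*,r)$, while the case $x=x^*$ is trivial since $0\in F(x^*)+T(x^*)$ forces $N_{F+T}(x^*)=x^*$.

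The main obstacle is the first step: producing the correct variational inequality from the two inclusions and recognizing that the quadratic form on the left can be coerced from below only through the symmetric part $\widehat{F'(x)}$, for which the invertibility and positivity provided by Lemma~\ref{le:wdns} are essential. Once that inequality is in hand, the rest is algebraic manipulation with the identity $e_f(s,0)=|f'(s)|\,|n_f(s)|$ and the monotonicity properties already collected in Proposition~\ref{pr:incr2}.
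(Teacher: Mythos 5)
Your proof is correct and follows essentially the same route as the paper: the same variational inequality obtained from monotonicity of $T$ and the two inclusions, the same passage to the symmetric part $\widehat{F'(x)}$ followed by the coercivity estimate from Lemma~\ref{eq:adjunt}, the same chain through Lemmas~\ref{le:wdns} and \ref{pr:taylor}, the identity $e_f(s,0)=|f'(s)|\,|n_f(s)|$, and finally Proposition~\ref{pr:incr2} for the quadratic estimate and the invariance of $B(x^*,r)$. The only differences are cosmetic (you use $y-x^*$ where the paper uses $x^*-y$, and you spell out why the skew part of $F'(x)$ drops out of the quadratic form).
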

\begin{proof}
Since $0\in F(x^*) + T(x^*)$  we have $ x^*=N_{F+T}(x^*)$. Thus,  the inequality of the lemma is trivial for $x=x^*$. Now, assume that  $0<\|x-x^*\|\leq t$. Let $y=N_{F+T}(x)$. By \eqref{eq:NFef} we have $0\in F(x)+F'(x)(y-x)+T(y)$ for all $x\in  B(x^*, r)$. As $T$ is a maximal monotone, it follows that
$$
\langle F(x)-F(x^*)+F'(x)(x^*-x)+F'(x)(y-x^*), x^*-y \rangle \geq 0
$$
which implies that
\begin{equation}\label{eq.monmax}
\langle F(x)-F(x^*)+F'(x)(x^*-x), x^*-y \rangle \geq \langle F'(x)(x^*-y), x^*-y \rangle.
\end{equation}
Since, by Lemma~\ref{le:wdns}, $\widehat{F'(x)}$ is a positive operator and $\widehat{F'(x)}^{-1}$ exists, we obtain from Lemma~\ref{eq:adjunt} that
 \begin{equation}\label{eq.monmax1}
\frac{\|x^*-y\|^2}{\|\widehat{F'(x)}^{-1}\|}\leq  \langle \widehat{F'(x)}(x^*-y),x^*-y\rangle.
\end{equation}
Note that
$
\langle \widehat{F'(x)}(x^*-y),x^*-y\rangle = \langle F'(x)(x^*-y),x^*-y\rangle,
$
this together with \eqref{eq.monmax1} and \eqref{eq.monmax} yields that
\begin{equation*}\label{eq.monmax21}
\|x^*-y\|^2 \leq \|\widehat{F'(x)}^{-1}\| \langle F'(x)(x^*-y),x^*-y\rangle \leq \|\widehat{F'(x)}^{-1}\| \langle F(x)-F(x^*)+F'(x)(x^*-x), x^*-y\rangle.
\end{equation*}
Hence, after simple manipulations, above inequality becomes
\begin{equation}\label{eq.monmax2}
\|x^*-y\| \leq \|\widehat{F'(x)}^{-1}\| \|F(x)-F(x^*)+F'(x)(x^*-x)\|.
\end{equation}
Using \eqref{eq:def.er}, second part in Lemma~\ref{le:wdns} and Lemma~\ref{pr:taylor} in \eqref{eq.monmax2} we obtain that
\begin{equation}\label{eq.monmax3}
\|x^*-y\| \leq  \frac{\|\widehat{F'(x^*)}^{-1}\|}{|f'(\|x-x^*\|)|}\|E_F(x,x^*)\| \leq \frac{e_{f}(\|x-x^*\|, 0)}{|f'(\|x-x^*\|)|}.
\end{equation}
On the other hand, taking into account that $f(0)=0$, the definitions of $e_f$ and $n_f$ imply that
$$
\frac{e_{f}(\|x-x^*\|, 0)}{|f'(\|x-x^*\|)|}=-n_f (\|x-x^*\|)= |n_f (\|x-x^*\|)|.
$$
As $\|x-x^*\|\leq t$, the first part of Proposition~\ref{pr:incr2} gives $|n_f (\|x-x^*\|)|/\|x-x^*\|^2\leq |n_f(t)|/t^2,$ thus the last inequality becomes
$$
\frac{e_{f}(\|x-x^*\|, 0)}{|f'(\|x-x^*\|)|}\leq |n_f(t)|/t^2 \|x-x^*\|^2.
$$
Hence,  the desired inequality follows by combining \eqref{eq.monmax3} and the latter equation.

For proving second part of the lemma, take $x\in B[x^*, t]$.  Since $\|x-x^*\|\leq t$, first part of the lemma implies that  $\|N_{F+T}(x)-x^*\|\leq |n_{f}(t)|,$ and the first inclusion  follows.  Due to  $ r \leq \rho$, second part of Proposition~\ref{pr:incr2} implies that $|n_{f}(t)|\leq t$.  Thus,   the last inclusion is an immediate consequence of the first one.
\end{proof}
In the next result we  obtain the uniqueness of the solution  in the neighborhood $B[\bar{x}, \sigma].$
\begin{lemma}  \label{l:uniq}
Take $t\in (0,r)$ and suppose that $F'(x^*)$ is a positive operator and $\widehat{F'(x^*)}^{-1}$ exists. If $f(t)<0$, i.e., $0$ is the unique zero of $f$ in $[0,t],$ then $x^*$ is the unique solution of \eqref{eq:ipi} in $B[x^*,t].$ As consequence, $x^*$ is the unique solution of \eqref{eq:ipi}  in $B[x^*, \bar{\sigma}].$
\end{lemma}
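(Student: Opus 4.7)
The plan is to suppose $y\in B[x^*,t]$ also solves $F(y)+T(y)\ni 0$ and derive $y=x^*$. From $-F(x^*)\in T(x^*)$ and $-F(y)\in T(y)$, monotonicity of $T$ gives $\langle F(y)-F(x^*),\, x^*-y\rangle \geq 0$. Writing $F(y)-F(x^*)=F'(x^*)(y-x^*)+E_F(x^*,y)$, using that $\widehat{F'(x^*)}=F'(x^*)$ (since $F'(x^*)$ is positive, hence self-conjugate), and invoking Lemma~\ref{eq:adjunt}, one extracts
$$
\frac{\|y-x^*\|^2}{\|\widehat{F'(x^*)}^{-1}\|}\leq \langle F'(x^*)(y-x^*),\,y-x^*\rangle\leq \langle E_F(x^*,y),\,x^*-y\rangle\leq \|E_F(x^*,y)\|\,\|y-x^*\|,
$$
so $\|y-x^*\|\leq \|\widehat{F'(x^*)}^{-1}\|\,\|E_F(x^*,y)\|$.

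Next I would mimic the proof of Lemma~\ref{pr:taylor} to control the linearization error: writing $E_F(x^*,y)=\int_0^1 [F'(x^*+s(y-x^*))-F'(x^*)](y-x^*)\,ds$, applying the majorant condition \eqref{Hyp:MH} to each integrand with $\tau=0$, and integrating in $s$ using $f(0)=0$ and $f'(0)=-1$, produces
$$
\|\widehat{F'(x^*)}^{-1}\|\,\|E_F(x^*,y)\|\leq f(\|y-x^*\|)+\|y-x^*\|.
$$
Combined with the previous bound this forces $f(\|y-x^*\|)\geq 0$.

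To conclude, I would use that \textbf{h1}--\textbf{h2} force $f$ to be strictly decreasing on $[0,\nu]$ and strictly increasing on $[\nu,R)$, so $f<0$ on the whole interval $(0,t]$ whenever $f(t)<0$; hence $\|y-x^*\|\in(0,t]$ would contradict $f(\|y-x^*\|)\geq 0$, forcing $\|y-x^*\|=0$. For the consequence on $B[x^*,\bar\sigma]$, given a solution $y$ in that ball with $\|y-x^*\|<\bar\sigma$, I can pick $t'$ with $\|y-x^*\|<t'<\bar\sigma\leq\sigma$, so $f(t')<0$ by the structure of $f$, and applying the first part with $t=t'$ gives $y=x^*$. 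The main delicate point is the error estimate: \eqref{Hyp:MH} is phrased in terms of $F'(x)-F'(x^*+\tau(x-x^*))$, and the key observation is that choosing the intermediate base at $x^*$ corresponds to $\tau=0$, which is precisely what makes $f'(0)=-1$ available and collapses the integrated majorant into the usable inequality above.
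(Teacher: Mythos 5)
Your argument is correct and follows essentially the same route as the paper: obtain $\langle F(y)-F(x^*),x^*-y\rangle\geq 0$ from monotonicity, split off $F'(x^*)(y-x^*)$, bound the quadratic term below via Lemma~\ref{eq:adjunt}, control $E_F(x^*,y)$ with the majorant condition at $\tau=0$, and conclude $f(\|y-x^*\|)\geq 0$, which contradicts $f<0$ on $(0,t]$ (a consequence of strict convexity and $f(0)=0$, $f(t)<0$) unless $y=x^*$. The only cosmetic difference is that you phrase the error step through the $E_F$ notation and invoke $\widehat{F'(x^*)}=F'(x^*)$ explicitly, whereas the paper writes out the integral representation directly; the substance is identical, and your handling of the $B[x^*,\bar\sigma]$ consequence matches the paper's (equally brief) appeal to the definition of $\sigma$.
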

\begin{proof}
Assume that $y \in B[x^*,t]$ and $0\in F(y)+T(y).$  Then, as $T$ is a maximal monotone and $0\in F(x^*)+T(x^*)$ we obtain that
\begin{equation*}
\langle F(y)-F(x^*), x^*-y\rangle \geq 0,
\end{equation*}
which implies that
$
\langle F(y)-F(x^*)-F'(x^*)(y-x^*)+F'(x^*)(y-x^*), x^*-y\rangle \geq 0
$
and thus
\begin{equation}\label{eq.uni}
\langle F(y)-F(x^*)-F'(x^*)(y-x^*), x^*-y\rangle \geq \langle F'(x^*)(x^*-y), (x^*-y)\rangle.
\end{equation}
Since $F'(x^*)$ is a positive operator and $\widehat{F'(x^*)}^{-1}$ exists, we can apply Lemma~\ref{eq:adjunt} to obtain that
\begin{equation}\label{eq.uni1}
\langle F'(x^*)(x^*-y), (x^*-y)\rangle =\langle \widehat{F'(x^*)}(x^*-y), (x^*-y)\rangle \geq \frac{\|x^*-y\|^2}{\|\widehat{F'(x^*)}^{-1}\|}.
\end{equation}
On the other hand
$$
F(y)-F(x^*)-F'(x^*)(y-x^*)=\int_0^1  [F'(x^*+t(y-x^*))-F'(x^*)](y-x^*)dt.
$$
Combining above equality with \eqref{eq.uni1} and \eqref{eq.uni}, yields that
$$
\|y-x^*\| \leq \int_0^1 \|\widehat{F'(x^*)}^{-1}\| \|F'(x^*+t(y-x^*))-F'(x^*)\|\|(y-x^*)\|dt.
$$
Using \eqref{Hyp:MH} with $x=x^*+t(y-x^*)$ and $\tau=0$  it is easy to conclude from the last equality that
$$
\|y-x^*\| \leq \int_0^1 [f'(t\|y-x^*\|)-f'(0)]\|y-x^*\|dt= f(\|y-x^*\|)-f(0)-f'(0)\|y-x^*\|.
$$
Taking into account that $f(0)=0$  and $f'(0)=-1$   the latter inequality becomes
$$
f(\|y-x^*\|) \geq 0.
$$
Since $f$ is strictly convex and $f(t)<0$, we will have $f<0$ in $(0,t]$, i.e., $0$ is the unique zero of $f$ in $[0,t]$ and hence, the above inequality implies that $\|y-x^*\|=0$, i.e., $y=x^*$. Thus, $x^*$ is the unique zero of $F+T\ni 0$ in $B[x^*,t]$. The second part follows from the definition of $\sigma$.
\end{proof}
In the next result  we will obtain the  the optimal convergence radius, which has its  proof similar  to the proof of Lemma 2.15 of \cite{Ferreira2009}.
\begin{lemma} \label{pr:best}
If  $f(\rho)/(\rho f'(\rho))-1=1$ and $\rho < \kappa$, then  $r=\rho$ is the optimal convergence radius.
\end{lemma}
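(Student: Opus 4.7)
The plan is to construct an explicit one-dimensional instance that satisfies every hypothesis of Theorem~\ref{th:nt} with the given majorant $f$, but for which the Newton iteration starting at distance exactly $\rho$ from the solution fails to produce a strictly decreasing (let alone convergent) sequence $\{t_k\}$. Since for any $r' > \rho$ the point at distance $\rho$ from $x^*$ belongs to the open ball $B(x^*, r')$, this rules out replacing $\rho$ by any larger number.

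I would take $\banachh = \mathbb{R}$, $x^* = 0$, $T \equiv \{0\}$ (trivially maximal monotone), and let $F$ be a $C^1$ function on a symmetric open interval about $0$ whose restriction to $[0, R)$ equals $-f$. With this choice $F'(0) = -f'(0) = 1$, so $F'(x^*)$ is a positive operator with $\widehat{F'(x^*)} = 1$ and $\widehat{F'(x^*)}^{-1} = 1$; moreover, for any $x \in [0, \kappa)$ the majorant condition \eqref{Hyp:MH} is satisfied, with equality, by virtue of the monotonicity of $f'$ (hypothesis \textbf{h2}). The iteration \eqref{eq:DNS} then reduces in this setting to the scalar recursion $x_{k+1} = n_f(x_k)$.

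The crucial computation is immediate from the hypothesis $f(\rho)/(\rho f'(\rho)) - 1 = 1$:
\[
n_f(\rho) = \rho - \frac{f(\rho)}{f'(\rho)} = \rho \left(1 - \frac{f(\rho)}{\rho f'(\rho)}\right) = \rho(1 - 2) = -\rho.
\]
Hence, taking $x_0 = \rho$ we obtain $t_0 = \rho$ and $t_1 = |n_f(\rho)| = \rho$, so the majorant sequence $\{t_k\}$ is stationary at $\rho$ and in particular fails to be strictly decreasing or to converge to $0$. As $x_0 = \rho \in B(x^*, r')$ for every $r' \in (\rho, \kappa)$, the conclusion of Theorem~\ref{th:nt} cannot hold with any radius larger than $\rho$, and so $r = \rho$ is optimal.

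The main technical delicacy is extending $-f$ from $[0, R)$ to a two-sided neighborhood of $0$ as a $C^1$ function without violating \eqref{Hyp:MH} on all of $B(x^*, \kappa)$; since the right-hand side of \eqref{Hyp:MH} depends only on $|x - x^*|$, an even-symmetric smooth extension suffices. Once that is in place, the argument is a short unwinding of the definitions of $n_f$ and $\rho$, and parallels Lemma~2.15 of \cite{Ferreira2009} cited in the statement.
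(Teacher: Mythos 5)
Your overall plan is the right one and is, as you note, the standard strategy behind Lemma~2.15 of \cite{Ferreira2009} (the paper itself defers to that reference and gives no written-out proof): build a scalar instance where the majorant inequality \eqref{Hyp:MH} holds with equality, observe that for such an instance the Newton iterates of $F$ coincide (in modulus) with the iterates of $n_f$, and compute that $n_f(\rho)=-\rho$. That computation is correct: from $f(\rho)/(\rho f'(\rho))=2$ one gets $n_f(\rho)=\rho\bigl(1-f(\rho)/(\rho f'(\rho))\bigr)=-\rho$, and since $x_0=\rho$ lies in $B(x^*,r')$ for every $r'>\rho$ while the resulting $\{t_k\}$ is constant, no radius larger than $\rho$ can satisfy the conclusion of Theorem~\ref{th:nt}.

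There is, however, a concrete error in the construction you flag as the ``main technical delicacy.'' An even extension of $-f$, that is $F(x)=-f(|x|)$, is \emph{not} a valid choice: since $f'(0)=-1\neq0$ one has $F'(0^+)=-f'(0)=1$ while $F'(0^-)=f'(0)=-1$, so $F$ is not differentiable at $x^*=0$ and the hypothesis that $F'$ be continuous is violated. The right-hand side of \eqref{Hyp:MH} depending only on $|x-x^*|$ is not, by itself, enough to make the even extension work, because the \emph{left}-hand side involves $F'$, which must be continuous. The fix is to take the \emph{odd} extension of $-f$, namely $F(x)=-\operatorname{sign}(x)\,f(|x|)$. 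Then $F'(x)=-f'(|x|)$ is an even, continuous function with $F'(0)=1>0$, the majorant condition \eqref{Hyp:MH} is satisfied with equality on both sides of $0$ (for $\tau\in[0,1]$ one has $|F'(x)-F'(\tau x)|=f'(|x|)-f'(\tau|x|)$ by monotonicity of $f'$), and with $T\equiv\{0\}$ the Newton iteration becomes $x_{k+1}=x_k-F(x_k)/F'(x_k)$. Starting at $x_0=\rho$ this gives $x_1=\rho-f(\rho)/f'(\rho)=-\rho$ and, by oddness, $x_2=\rho$; so $\{x_k\}$ genuinely oscillates between $\pm\rho$ and fails to converge, which is the substantive non-convergence statement one wants (not merely the failure of $\{t_k\}$ to decrease). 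With that replacement your argument is complete.
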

%%%%%%%%%%%%%%%%%%%%%%%%%%%%%%%%%%%%%%%%%%%%%%%%%%%%%%%%%%%%%%%%%%%%
\subsection{Proof of {\bf Theorem \ref{th:nt}}} \label{sec:proof}
%%%%%%%%%%%%%%%%%%%%%%%%%%%%%%%%%%%%%%%%%%%%%%%%%%%%%%%%%%%%%%%%%%%%
Firstly, it is easy to see that the inclusion  in \eqref{eq:DNS} together  \eqref{eq:NFef} imply that   the sequence $\{x_k\}$  satisfies
\begin{equation} \label{NFS}
0\in F(x_k)+F'(x_k)(N_{F+T}(x_k)-x_k)+ T(N_{F+T}(x_k)),\qquad k=0,1,\ldots \,.
\end{equation}

\begin{proof}
That $\{t_k\}$ is well defined, is strictly decreasing and is contained in $(0, \rho)$ follows from Corollary \ref{cr:kanttk}. Moreover, from this same corollary, we conclude that $\{t_{k+1}/ t_k^2\}$ is strictly decreasing, $\{t_k\}$ converges to $0$ and $t_{k+1}/t_k^2\leq [f''(t_0)/(2|f'(t_0)|)],$ for $k=0,1, \ldots.$

As $x_0\in B(x^*,r)$, and $r\leq \nu$, we conclude by combining  \eqref{NFS} and   inclusion  $N_{F+T}(B(x^*, r)) \subset B(x^*, r)$ in second part of Lemma~\ref{le:cl} that   $\{x_k\}$  is well defined and remains  in $B(x^*, r)$.  On the other hand, since $0< \|x_k-x^*\|<r\leq \rho,$ for $k=0,1,\ldots,$ we obtain from \eqref{NFS}, Lemma~\ref{le:cl} and second part of Proposition~\ref{pr:incr2} that
\begin{equation}\label{eq:conv}
\|x_{k+1}-x^*\| \leq |n_{f}(\|x_k-x^*\|)|<\|x_k-x^*\|, \qquad k=0,1,\ldots .
\end{equation}
Thence, $\{\|x_k-x^*\|\}$ is strictly decreasing and convergent. Let $b=\lim_{k \to \infty} \|x_k-x^*\|.$ Because $\{\|x_k-x^*\|\}$ rest in $(0,\rho)$ and it is  strictly decreasing we have $0\leq b<\rho.$ Then, by continuity of $n_{f}$ and \eqref{eq:conv} imply $0\leq b=|n_{f}(b)|,$ and from second part of Proposition~\ref{pr:incr2} we have $b=0.$ Therefore, we conclude that  $\{x_k\}$ converges to $x^*$.    Due to $t_0=\|x^*-x_0\|$,  definition  of $\{t_k\}$ in \eqref{eq:tknk} implies that $t_{k+1}=|n_{f}(t_k)|$, hence    \eqref{NFS}  and Lemma~\ref{le:cl}  imply that
 $$
 \|x_{k+1}-x^*\|=\|N_{F+T}(x_k)- x^*\|\leq |n_f(t_k)|,  \qquad k=0,1,\ldots.
 $$
Then, the  first inequality in  \eqref{eq:q2} follows from last inequality,   first part of  Lemma~\ref{le:cl} and    the definition of $\{t_k \}$ in \eqref{eq:tknk}. Finally, the uniqueness follows from  Lemma~\ref{l:uniq} and   the last statement in the theorem follows from   Lemma~\ref{pr:best}.
\end{proof}
%%%%%%%%%%%%%%%%%%%%%%%%%%%%%%%%%%%
\section{Some special cases} \label{apl}
%%%%%%%%%%%%%%%%%%%%%%%%%%%%%%%%%%
In this section, we will present some  special cases of  Theorem \ref{th:nt}. When    $F\equiv \{0\}$  and    $f'$ satisfies a Lipschitz-type condition,  we will obtain a particular instance of Theorem~\ref{th:nt}, which retrieves  the classical convergence theorem on Newton's method under the Lipschitz condition; see \cite{Rall1974, Traub1979}.  A version of Smale's theorem on Newton's method for analytical functions is obtained in Theorem~\ref{theo:Smale}.
%%%%%%%%%%%%%%%%%%%%%%%%%%%%%%%%%%%%%%%%%%%%%%%%%
\subsection{Under Lipschitz-type condition}
%%%%%%%%%%%%%%%%%%%%%%%%%%%%%%%%%%%%%%%%%%%%%%%%%
In this section,  we will present a version of  classical convergence theorem for Newton's method under Lipschitz-type condition  for generalized equations. The classical version for  $F\equiv \{0\}$   have  appeared   in  Rall \cite{Rall1974} and Traub and  Wozniakowski \cite{Traub1979}.
\begin{theorem} \label{th:cc}
Let $\banachh$ be a Hilbert space, $\Omega$ be an open nonempty subset of $\banachh$, $F: \Omega \to \banachh$ be continuous with Fr\'echet derivative $F'$ continuous, $T:\banachh \rightrightarrows  \banachh$ be a set-valued operator and $x^*\in \Omega$. Suppose that  $0 \in F(x^*) +T(x^*)$, $F'(x^*)$ is a positive operator and $\widehat{F'(x^*)}^{-1}$ exists and, there exists a constant $K>0$ such that
\begin{equation} \label{eq:hc}
\|\widehat{F'(x^*)}^{-1}\| \|f'(x)-f'(y)\| \leq K \|x-y\|,\qquad x,\, y\in \Omega.
\end{equation}
Let $r:=\min \left\{\kappa, \,2/(3K) \right\}$, where $\kappa:=\sup\{t>0: B(x^*, t)\subset \Omega\}$. Then, the sequences with starting point $x_0\in B(x^*, r)/\{x^*\}$ and $t_0=\|x^*-x_0\|$, respectively,
\begin{equation} \label{eq:Kant}
 F(x_k)+F'(x_k)(x_{k+1}-x_k)+T(x_{k+1})\ni 0, \qquad t_{k+1} =\left((K/2) \, t_k^2\right)/(1-Kt_k),\qquad k=0,1,\ldots\,,
\end{equation}
are well defined, $\{t_k\}$ is strictly decreasing, is contained in $(0, r)$ and converges to $0$, $\{x_k\}$ is contained in $B(x^*, r)$ and  converges to the point $x^*$ which is the unique solution of $F(x)+T(x)\ni 0$ in $B(x^*, \bar{\sigma})$, where $\bar{\sigma}=\min \{r,  2/K\}$. Moreover, $\{t_{k+1}/ t_k^2\}$ is strictly decreasing, $t_{k+1}/t_k^2<1/[2/K-2\|x^*-x_0\|]$  and
\begin{equation}\label{eq:converratefinal}
 \|x^*-x_{k+1}\| \leq \frac{K}{2} \, \frac{1}{1-Kt_k}\,\|x_k-x^*\|^2  \leq \frac{K}{2}\, \frac{1}{1-K\|x_0-x^*\|}\,\|x_k-x^*\|^2,\qquad  k=0,1, \ldots .
\end{equation}
If, additionally,  $2/(3K)<\kappa$, then $r=2/(3K)$ is the best  possible convergence radius.
\end{theorem}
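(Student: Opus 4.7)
The approach is to apply Theorem~\ref{th:nt} with the quadratic majorant $f:[0,R)\to\mathbb{R}$ defined by $f(t)=(K/2)t^{2}-t$. First I would verify the two scalar hypotheses together with the majorant inequality \eqref{Hyp:MH}. Clearly $f(0)=0$ and $f'(0)=-1$, and $f'(t)=Kt-1$ is affine, hence convex and strictly increasing, so \textbf{h1} and \textbf{h2} are satisfied. For \eqref{Hyp:MH}, use $\|x-(x^{*}+\tau(x-x^{*}))\|=(1-\tau)\|x-x^{*}\|$ and apply the Lipschitz hypothesis \eqref{eq:hc} to bound the left-hand side of \eqref{Hyp:MH} by $K(1-\tau)\|x-x^{*}\|=f'(\|x-x^{*}\|)-f'(\tau\|x-x^{*}\|)$.

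Next I would compute the auxiliary constants associated with this $f$. From $f'(t)<0$ one reads off $\nu=1/K$, and from $f(t)=t((K/2)t-1)<0$ on $(0,2/K)$ one gets $\sigma=\min\{\kappa,2/K\}$, so $\bar\sigma=\min\{r,2/K\}$. A short computation shows
$$
\frac{f(t)}{tf'(t)}-1=\frac{(K/2)t}{1-Kt},\qquad t\in(0,1/K),
$$
which is strictly increasing in $t$ and equals $1$ exactly at $t=2/(3K)$. Hence $\rho=2/(3K)$, so $r=\min\{\kappa,2/(3K)\}$ agrees with the statement. Moreover the boundary equality $f(\rho)/(\rho f'(\rho))-1=1$ holds at $\rho=2/(3K)$, so Lemma~\ref{pr:best} yields the optimality claim whenever $2/(3K)<\kappa$.

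Third I would make the scalar Newton iteration explicit. A direct calculation gives
$$
n_{f}(t)=t-\frac{(K/2)t^{2}-t}{Kt-1}=-\frac{(K/2)t^{2}}{1-Kt},\qquad 0<t<1/K,
$$
so $t_{k+1}=|n_{f}(t_k)|=((K/2)t_k^{2})/(1-Kt_k)$, matching \eqref{eq:Kant}, and the ratio $t_{k+1}/t_k^{2}=(K/2)/(1-Kt_k)$ is strictly increasing in $t_k$. Since $\{t_k\}$ is strictly decreasing by Corollary~\ref{cr:kanttk}, this ratio is bounded above by its value at $t_0$, namely $(K/2)/(1-K\|x_0-x^{*}\|)=1/(2/K-2\|x_0-x^{*}\|)$. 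Inserting both bounds into the first inequality of \eqref{eq:q2} produces the two estimates in \eqref{eq:converratefinal}.

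The remaining conclusions — well-definedness and confinement of $\{x_k\}$ to $B(x^{*},r)$, convergence to $x^{*}$, strict monotonicity and convergence of $\{t_k\}$ to $0$, uniqueness of the solution in $B(x^{*},\bar\sigma)$, and the Q-quadratic rate — transfer verbatim from Theorem~\ref{th:nt} once the majorant $f(t)=(K/2)t^{2}-t$ has been identified. No conceptual obstacle is anticipated; the only detail requiring some care is keeping the two different bounds on $t_{k+1}/t_k^{2}$ aligned so that both factors appearing in \eqref{eq:converratefinal} are produced correctly.
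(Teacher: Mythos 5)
Your proposal is correct and follows essentially the same route as the paper: specialize Theorem~\ref{th:nt} to the quadratic majorant $f(t)=(K/2)t^2-t$, verify \textbf{h1}, \textbf{h2} and \eqref{Hyp:MH} directly from the Lipschitz hypothesis, compute $\nu=1/K$, $\rho=2/(3K)$, $\sigma$, and the explicit Newton iteration $n_f$, and then read off all conclusions (including optimality via $f(\rho)/(\rho f'(\rho))-1=1$) from Theorem~\ref{th:nt} and Lemma~\ref{pr:best}. The small bit of extra book-keeping you do (deriving $n_f(t)=-(K/2)t^2/(1-Kt)$ and $f(t)/(tf'(t))-1=(K/2)t/(1-Kt)$ explicitly) is exactly what the paper's proof implicitly relies upon.
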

\begin{proof}
Using condition in \eqref{eq:hc},  we can immediately prove that  $F$, $x^*$ and $f:[0, \kappa)\to \mathbb{R}$, defined by
$
f(t)=Kt^{2}/2-t,
$
satisfy the inequality \eqref{Hyp:MH} and the conditions  {\bf h1} and  {\bf h2} in Theorem \ref{th:nt}. In this case, it is easy to see that  $\rho$ and $\nu$, as defined in Theorem \ref{th:nt}, satisfy
$
\rho= 2/(3K) \leq \nu=1/K
$
and, as a consequence,  $r:=\min \{\kappa,\; 2/(3K)\}$. Moreover, $f(\rho)/(\rho f'(\rho))-1=1$, $f(0)=f(2/K)=0$ and $f(t)<0$ for all $t\in (0,\, 2/K)$. Also,  the sequence $\{t_k\}$  in  Theorem \ref{th:nt} is given by  \eqref{eq:Kant}  and
$$
t_{k+1}/t_k^2=\frac{K}{2} \, \frac{1}{1-Kt_k}<\frac{K}{2}\, \frac{1}{1-K\|x_0-x^*\|},  \qquad k=0,1,\ldots.
$$
Therefore, the result follows  by invoking Theorem~\ref{th:nt}.
\end{proof}
\begin{remark}
The  above result  contain,  as particular instance,  several   theorem on Newton's method; see,  for example, Rall \cite{Rall1974},  Traub and  Wozniakowski \cite{Traub1979} and  Daniel \cite{Daniel1973}.
\end{remark}
\begin{remark}
Since $\|x^*-x_0\|\leq 2/(3K)$, the last inequality in \eqref{eq:converratefinal} implies that $\|x^*-x_{k+1}\|\leq 3K/2\|x^*-x_k\|^2$ for $k=0,1,\ldots.$ Then, we conclude that
$$
\|x^*-x_k\| \leq \frac{2}{3K}\left(\frac{3K}{2}\|x^*-x_0\|\right)^{2^k}, \quad k=0,1,\ldots.
$$
\end{remark}
%%%%%%%%%%%%%%%%%%%%%%%%%%%%%
\subsection{Under Smale's-type condition}
%%%%%%%%%%%%%%%%%%%%%%%%%%%%
In this section,  we will present a version of  classical convergence theorem for Newton's method under Smale's-type condition for generalized equations. The classical version  has  appeared   in   corollary of Proposition 3 pp.~195 of Smale \cite{Smale1986}, see also Proposition 1 pp.~157 and Remark 1 pp.~158 of  Blum, Cucker,  Shub, and Smale~\cite{BlumSmale1998}; see also \cite{Ferreira2009}.

\begin{theorem} \label{theo:Smale}
Let $\banachh$ be a Hilbert space, $\Omega$ be an open nonempty subset of $\banachh$, $F: \Omega \to \banachh$ be an analytic function, $T:\banachh \rightrightarrows  \banachh$ be a set-valued operator and $x^*\in \Omega$. Suppose that  $0 \in F(x^*) +T(x^*)$, $F'(x^*)$ is a positive operator and $\widehat{F'(x^*)}^{-1}$ exists. Suppose that
\begin{equation} \label{eq:SmaleCond}
   \gamma:= \|\widehat{F'(x^*)}^{-1}\|\sup _{ n > 1 }\left\| \frac  {F^{(n)}(x^*)}{n!}\right\|^{1/(n-1)}<+\infty.
\end{equation}
Let $r=\min \{\kappa,\; (5-\sqrt{17})/(4\gamma)\}$, where $\kappa:=\sup\{t>0 ~:~ B(x^*, t)\subset \Omega\}$. Then, the sequences  with starting point $x_0\in B(x^*, r)/\{x^*\}$ and $t_0=\|x^*-x_0\|$, respectively
$$
0\in F(x_k)+F'(x_k)(x_{k+1}-x_k)+T(x_{k+1}), \qquad  t_{k+1}=\gamma t_k^2/[2(1- \gamma t_k)^2-1],\qquad k=0,1,\ldots\,,
$$
are well defined, $\{t_k\}$ is strictly decreasing, contained in $(0, r)$ and converges to $0$, and $\{x_k\}$ is contained in $B(x^*,r)$ and  converges to the point $x^*$ which is the unique solution of $F(x)+T(x)\ni 0$ in $B(x^*, \bar{\sigma})$, where $\bar{\sigma}=\min \{r, 1/(2\gamma)\}$.  Moreover, $\{t_{k+1}/ t_k^2\}$ is strictly decreasing, $t_{k+1}/t_k^2<\gamma/[2(1-\gamma \|x_0-x^*\|)^2-1]$,   for  $  k=0,1,\ldots $ and
$$
    \|x_{k+1}-x^*\| \leq  \frac{ \gamma}{2(1- \gamma t_k)^2-1}\;\|x_k-x^*\|^2\leq \frac{ \gamma}{2(1- \gamma \|x_0-x^*\|)^2-1}\;\|x_k-x^*\|^2, \qquad k=0,1, \ldots.
$$
If, additionally,  $(5-\sqrt{17})/(4 \gamma)<\kappa$, then $r=(5-\sqrt{17})/(4\gamma)$ is the best  possible convergence radius.
\end{theorem}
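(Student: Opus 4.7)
The plan is to apply Theorem~\ref{th:nt} with the classical Smale majorant function
\begin{equation*}
f:[0,1/\gamma)\to\mathbb{R},\qquad f(t)=\frac{t}{1-\gamma t}-2t,
\end{equation*}
and read off every quantity appearing in the statement (the radius $r$, the zero $\sigma$ of $f$, the Newton iteration $n_f$, and the optimality condition) from the elementary algebra of this specific $f$.

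The first step is to verify that $f$ fulfills hypotheses \textbf{h1}–\textbf{h2}: clearly $f(0)=0$; differentiating gives $f'(t)=(1-\gamma t)^{-2}-2$ with $f'(0)=-1$, and $f''(t)=2\gamma(1-\gamma t)^{-3}>0$ on $[0,1/\gamma)$, so $f'$ is strictly increasing and convex. Next, I would check the majorant inequality \eqref{Hyp:MH}. Since $F$ is analytic, for $x\in B(x^*,1/\gamma)$ I expand
\begin{equation*}
F'(x)-F'(x^*+\tau(x-x^*))=\sum_{n\geq 1}\frac{F^{(n+1)}(x^*)}{n!}(1-\tau^n)(x-x^*)^n,
\end{equation*}
take norms, multiply by $\|\widehat{F'(x^*)}^{-1}\|$, and use the Smale bound \eqref{eq:SmaleCond} in the form $\|\widehat{F'(x^*)}^{-1}\|\|F^{(n+1)}(x^*)\|/n!\leq(n+1)\gamma^n$. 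Expanding $f'$ as a power series shows exactly
\begin{equation*}
f'(s)-f'(\tau s)=\sum_{n\geq 1}(n+1)\gamma^n(1-\tau^n)s^n,
\end{equation*}
so the right-hand side of the inequality I obtained coincides termwise with $f'(\|x-x^*\|)-f'(\tau\|x-x^*\|)$, giving \eqref{Hyp:MH}.

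Now I would compute the constants from Theorem~\ref{th:nt} for this particular $f$. Solving $f(t)=0$ on $(0,1/\gamma)$ gives the single root $t=1/(2\gamma)$, so $\sigma=1/(2\gamma)$. A direct calculation of $n_f(t)=t-f(t)/f'(t)$, after clearing denominators, simplifies to
\begin{equation*}
|n_f(t)|=\frac{\gamma t^2}{2(1-\gamma t)^2-1},
\end{equation*}
which matches the recurrence for $\{t_k\}$ in the theorem. For $\rho$, setting $s=\gamma t$ and substituting into $f(t)/(tf'(t))=2$ leads to a quadratic whose admissible root in $(0,1/\gamma)$ is $s=(5-\sqrt{17})/4$, that is, $\rho=(5-\sqrt{17})/(4\gamma)$; a quick arithmetic check (both $f(\rho)/\rho$ and $f'(\rho)$ equal $(\sqrt{17}-7)/4$ up to a factor of $2$) confirms $f(\rho)/(\rho f'(\rho))=2$, which means the optimality hypothesis of Theorem~\ref{th:nt} is satisfied and $r=\min\{\kappa,(5-\sqrt{17})/(4\gamma)\}$.

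Once these identifications are in place, all conclusions of Theorem~\ref{theo:Smale} follow by direct invocation of Theorem~\ref{th:nt}: well-definedness and convergence of $\{x_k\}$ and $\{t_k\}$, the quadratic estimate $\|x^*-x_{k+1}\|\leq(t_{k+1}/t_k^2)\|x_k-x^*\|^2$ together with the closed-form $t_{k+1}/t_k^2=\gamma/[2(1-\gamma t_k)^2-1]$ (monotone decreasing in $k$ since $\{t_k\}$ is), the uniqueness in $B(x^*,\bar\sigma)$ with $\bar\sigma=\min\{r,1/(2\gamma)\}$, and finally the optimality of $(5-\sqrt{17})/(4\gamma)$ under $(5-\sqrt{17})/(4\gamma)<\kappa$. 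The only delicate step is the termwise matching of the analytic expansion of $F'$ with the power series of $f'$; everything else is bookkeeping once $f$ has been chosen.
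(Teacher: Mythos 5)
Your proposal is correct and arrives at all the same identifications --- the majorant $f(t)=t/(1-\gamma t)-2t$, the zero $\sigma=1/(2\gamma)$, the Newton iterate $|n_f(t)|=\gamma t^2/[2(1-\gamma t)^2-1]$, the radius $\rho=(5-\sqrt{17})/(4\gamma)$, and the optimality equation $f(\rho)/(\rho f'(\rho))=2$ --- but the way you verify the majorant condition \eqref{Hyp:MH} differs from the paper. You expand $F'(x)-F'(x^*+\tau(x-x^*))$ and $f'(s)-f'(\tau s)$ as power series and match them term by term using the coefficient bound derived from \eqref{eq:SmaleCond}; this is a direct, self-contained Smale-style argument. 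The paper instead factors the verification through the second derivative: it invokes Lemma~\ref{lem.cond1} to show $\|F''(x)\|\le 2\gamma/(1-\gamma\|x-x^*\|)^3$ on $B(x^*,1/\gamma)$ and then Lemma~\ref{lc}, which states that the pointwise bound $\|\widehat{F'(x^*)}^{-1}\|\|F''(x)\|\le f''(\|x-x^*\|)$ already implies \eqref{Hyp:MH}. The paper's route is more modular --- the analytic work is isolated in two cited lemmas, and the reduction to a $C^2$ inequality reusable for other majorants --- whereas yours confronts the series directly, which buys self-containment at the price of carrying the termwise estimate through the integral/telescoping structure yourself. Both are legitimate; the one thing to watch in your termwise step is that the coefficient bound $\|\widehat{F'(x^*)}^{-1}\|\,\|F^{(n+1)}(x^*)\|/n!\le(n+1)\gamma^n$ only follows from \eqref{eq:SmaleCond} if the factor $\|\widehat{F'(x^*)}^{-1}\|$ is understood to sit inside the $(n-1)$-th root, as in the standard Smale $\gamma$; this reading is also what the paper's Lemma~\ref{lem.cond1} implicitly requires, so it is a shared convention rather than a defect peculiar to your argument.
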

Before proving above theorem we need of two results. The next results gives a condition  that is easier to check than condition
\eqref{Hyp:MH}, whenever  the functions under consideration  are twice continuously differentiable, and its proof follows the same path of Lemma~21 of \cite{FerreiraGoncalvesOliveira2011}.
\begin{lemma}\label{lem.cond1}
Let $\Omega \subset \banachh$ be an open set, and let $F:{\Omega}\to \banachh$ be an analytic function. Suppose that $x^* \in \Omega$ and $B(x^*, 1/ \gamma)\subset \Omega,$ where $\gamma$ is defined in \eqref{eq:SmaleCond}. Then for all $x\in B(x^*, 1/  \gamma),$ it holds that
$
\|F''(x)\|\leq 2  \gamma/(1-  \gamma\|x-x^*\|)^3.
$
\end{lemma}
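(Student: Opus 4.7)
The plan is to expand $F''$ in a Taylor series about $x^*$, bound each coefficient by a power of $\gamma$, and recognize the resulting scalar series as the second derivative of a geometric sum.

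First, since $F$ is analytic on $\Omega$ and $B(x^*,1/\gamma)\subset\Omega$, the Taylor series of $F$ around $x^*$ converges on this ball, and differentiating twice term-by-term yields, for every $x\in B(x^*,1/\gamma)$,
\begin{equation*}
F''(x)\;=\;\sum_{n=2}^{\infty}\frac{F^{(n)}(x^*)}{(n-2)!}\,(x-x^*)^{n-2}.
\end{equation*}
Rewriting $1/(n-2)!=n(n-1)/n!$ and applying the triangle inequality,
\begin{equation*}
\|F''(x)\|\;\leq\;\sum_{n=2}^{\infty}n(n-1)\,\Bigl\|\tfrac{F^{(n)}(x^*)}{n!}\Bigr\|\,\|x-x^*\|^{n-2}.
\end{equation*}

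Next, I would read off from \eqref{eq:SmaleCond} the standard Smale-type coefficient bound $\|F^{(n)}(x^*)/n!\|\le \gamma^{n-1}$ for every $n\geq 2$: the factor $\|\widehat{F'(x^*)}^{-1}\|$ appearing in the definition of $\gamma$ is precisely what is needed to normalize the $(n-1)$-st root of $\|F^{(n)}(x^*)/n!\|$ so that this clean inequality holds. Substituting this bound and using the elementary identity $\sum_{n\geq 2}n(n-1)t^{n-2}=2/(1-t)^3$, valid for $|t|<1$, with $t=\gamma\|x-x^*\|<1$, one obtains
\begin{equation*}
\|F''(x)\|\;\leq\;\gamma\sum_{n=2}^{\infty}n(n-1)\bigl(\gamma\|x-x^*\|\bigr)^{n-2}\;=\;\frac{2\gamma}{\bigl(1-\gamma\|x-x^*\|\bigr)^{3}},
\end{equation*}
which is the desired conclusion.

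The main obstacle is purely bookkeeping: extracting from \eqref{eq:SmaleCond} the pointwise bound $\|F^{(n)}(x^*)/n!\|\leq\gamma^{n-1}$, so that the constant in front of the final geometric sum comes out to $2\gamma$ and the parameter inside the denominator to $\gamma\|x-x^*\|$. Everything else is routine: termwise differentiation is justified by absolute convergence of the Taylor series on $B(x^*,1/\gamma)$, and the identity $\sum n(n-1)t^{n-2}=2/(1-t)^3$ is just the second derivative of the geometric series. This mirrors the argument in Lemma~21 of \cite{FerreiraGoncalvesOliveira2011}, which the paper cites as a template.
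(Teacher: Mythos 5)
Your overall route—Taylor-expand $F''$ about $x^*$, bound the coefficients, and sum $\sum n(n-1)t^{n-2}=2/(1-t)^3$—is indeed the argument the paper points to (Lemma~21 of \cite{FerreiraGoncalvesOliveira2011}), and everything you call routine really is routine. The problem is precisely the step you label as ``purely bookkeeping.''

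Write $\beta:=\|\widehat{F'(x^*)}^{-1}\|$. Equation~\eqref{eq:SmaleCond} places $\beta$ \emph{outside} the supremum and \emph{outside} the $(n-1)$-st root: it defines $\gamma=\beta\,\gamma_0$ with $\gamma_0:=\sup_{n>1}\|F^{(n)}(x^*)/n!\|^{1/(n-1)}$. The per-coefficient bound that actually follows is therefore
$$
\left\|\frac{F^{(n)}(x^*)}{n!}\right\|\leq \gamma_0^{\,n-1}=\Bigl(\frac{\gamma}{\beta}\Bigr)^{n-1},\qquad n\geq 2,
$$
and not $\|F^{(n)}(x^*)/n!\|\leq\gamma^{n-1}$ as you assert. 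Plugging the true bound into your computation gives
$$
\|F''(x)\|\leq \frac{2\gamma_0}{\bigl(1-\gamma_0\|x-x^*\|\bigr)^{3}},
$$
which coincides with the stated conclusion only when $\beta\geq 1$; that is neither a hypothesis of the lemma nor automatically true for a positive invertible operator. So the sentence ``the factor $\|\widehat{F'(x^*)}^{-1}\|$ \ldots is precisely what is needed to normalize \ldots'' is exactly where a genuine argument (or a change of normalisation) is missing, not a triviality one can read off.

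There is a second, related mismatch worth flagging. The lemma is invoked in Theorem~\ref{theo:Smale} together with Lemma~\ref{lc}, which needs the \emph{weighted} bound $\beta\,\|F''(x)\|\leq f''(\|x-x^*\|)=2\gamma/(1-\gamma\|x-x^*\|)^3$. If you carry the factor $\beta$ through your geometric-series calculation using the correct coefficient bound $\beta\,\|F^{(n)}(x^*)/n!\|\leq \beta\,(\gamma/\beta)^{n-1}$, you land on $\beta\|F''(x)\|\leq 2\gamma/(1-\gamma_0\|x-x^*\|)^3$, still with $\gamma_0$ rather than $\gamma$ inside the denominator. The clean inequality $\beta\|F''(x)\|\leq 2\gamma/(1-\gamma\|x-x^*\|)^3$ is what one gets if $\gamma$ is defined with $\beta$ \emph{inside} the $(n-1)$-st root, i.e.\ $\gamma=\sup_{n>1}\bigl(\beta\,\|F^{(n)}(x^*)/n!\|\bigr)^{1/(n-1)}$. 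Your proof should either use that normalisation explicitly, or state and justify the extra hypothesis $\beta\geq 1$, rather than silently swapping between inequivalent readings of $\gamma$.
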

The next result  gives a relationship  between  the second derivatives $F''$ and $ f''$,  which allow us to show that  $F$ and $f$ satisfy \eqref{Hyp:MH}, and its proof  is similar to Lemma~22 of \cite{FerreiraGoncalvesOliveira2011}.
\begin{lemma} \label{lc}
Let $\banachh$ be a Hilbert space, $\Omega\subseteq \banachh$ be an open set,
  $F:{\Omega}\to \banachh$  be twice continuously differentiable. Let $x^* \in \Omega$, $R>0$  and  $\kappa=\sup\{t\in [0, R): B(x^*, t)\subset \Omega\}$. Let \mbox{$f:[0,R)\to \mathbb {R}$} be twice continuously differentiable such that $ \|\widehat{F'(x^*)}^{-1}\|\|F''(x)\|\leqslant f''(\|x-x^*\|),$
for all $x\in B(x^*, \kappa)$, then $F$ and $f$ satisfy \eqref{Hyp:MH}.
\end{lemma}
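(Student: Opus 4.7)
The plan is to derive \eqref{Hyp:MH} by writing $F'(x)-F'(x^*+\tau(x-x^*))$ as an integral of $F''$ along the segment joining $x^*+\tau(x-x^*)$ to $x$, taking norms, and then applying the pointwise bound on $\|F''\|$ in terms of $f''$ to convert the integral on the left-hand side into a difference of values of $f'$. Since $f$ is itself twice continuously differentiable, the resulting integral of $f''$ along a radial segment is exactly $f'(\|x-x^*\|)-f'(\tau\|x-x^*\|)$ by the fundamental theorem of calculus.

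In more detail, I would fix $x\in B(x^*,\kappa)$ and $\tau\in[0,1]$. Because $B(x^*,\kappa)\subset\Omega$ is convex and the point $x^*+s(x-x^*)$ lies in $B(x^*,\kappa)$ whenever $s\in[\tau,1]$, the map $s\mapsto F'(x^*+s(x-x^*))$ is continuously differentiable on this interval with derivative $F''(x^*+s(x-x^*))(x-x^*)$. Hence
\begin{equation*}
F'(x)-F'(x^*+\tau(x-x^*))=\int_{\tau}^{1}F''\bigl(x^*+s(x-x^*)\bigr)(x-x^*)\,ds.
\end{equation*}
Taking norms, multiplying by $\|\widehat{F'(x^*)}^{-1}\|$, and using the hypothesis $\|\widehat{F'(x^*)}^{-1}\|\|F''(y)\|\leq f''(\|y-x^*\|)$ applied at $y=x^*+s(x-x^*)$ (so that $\|y-x^*\|=s\|x-x^*\|$), I get
\begin{equation*}
\|\widehat{F'(x^*)}^{-1}\|\,\|F'(x)-F'(x^*+\tau(x-x^*))\|\leq\int_{\tau}^{1}f''\bigl(s\|x-x^*\|\bigr)\,\|x-x^*\|\,ds.
\end{equation*}

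The final step is the change of variables $u=s\|x-x^*\|$, which converts the right-hand side into $\int_{\tau\|x-x^*\|}^{\|x-x^*\|}f''(u)\,du=f'(\|x-x^*\|)-f'(\tau\|x-x^*\|)$, exactly matching \eqref{Hyp:MH}. There is essentially no obstacle here; the only points that require care are making sure the line segment stays in $B(x^*,\kappa)$ (which follows from convexity of the ball and the fact that both endpoints lie in it for $s\in[\tau,1]$) and handling the degenerate case $x=x^*$, where both sides are zero and the conclusion is trivial. Because the argument is pointwise in $\tau$ and $x$, no additional uniform estimates are needed.
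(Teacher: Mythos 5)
Your proof is correct and follows the same standard argument that the paper (via its reference to Lemma~22 of Ferreira, Gon\c{c}alves, and Oliveira) relies on: integrate $F''$ along the radial segment from $x^*+\tau(x-x^*)$ to $x$, bound the integrand by $f''$ using the hypothesis, and apply the fundamental theorem of calculus after the change of variable $u=s\|x-x^*\|$. No gaps; the convexity and degenerate-case remarks are the right housekeeping.
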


\noindent
{\bf [Proof of Theorem \ref{theo:Smale}]}.
Consider $f:[0, 1/ \gamma) \to \mathbb{R}$ defined by $f(t)=t/(1- \gamma t)-2t$. Note that $f$ is  analytic and
$f(0)=0$,   $f'(t)=1/(1- \gamma t)^2-2$, $f'(0)=-1$, $f''(t)=2 \gamma/(1-\gamma t)^3$.
It follows from the last  equalities  that $f$ satisfies {\bf h1}  and  {\bf h2}.  Combining  Lemma~\ref{lc}  with  Lemma~\ref{lem.cond1}, we conclude  that $F$  and $f$ satisfy  \eqref{Hyp:MH}.  The constants,  $\nu$,  $\rho$ and $r$, as defined in Theorem \ref{th:nt}, satisfy
$$\rho= \frac{5-\sqrt{17}}{4 \gamma}< \nu=\frac{\sqrt{2}-1}{\sqrt{2} \gamma}< \frac{1}{\ \gamma},\qquad r=\min \left\{\kappa,\; \frac{5-\sqrt{17}}{4 \gamma}\right\}.
$$
Moreover, $f(\rho)/(\rho f'(\rho))-1=1$ and $f(0)=f(1/(2 \gamma))=0$ and $f(t)<0$ for $t\in (0,\, 1/(2\gamma))$. Also, $\{t_k\}$  satisfy
$$
t_{k+1}/t_k^2=\frac{ \gamma}{2(1- \gamma t_k)^2-1}<\frac{ \gamma}{2(1- \gamma \|x_0-x^*\|)^2-1} ,  \qquad k=0,1,\ldots.
$$
Therefore, the result follows by applying the Theorem~\ref{th:nt}.
\qed
%%%%%%%%%%%%%%%%%%%%%%%%%%%%
%%%%%%%%%%%%%%%%%%%%%%%%%%%%%%%%%%%%%%%%%%%
%\section{Final remarks } \label{rf}
%%%%%%%%%%%%%%%%%%%%%%%%%%%%%%%%%%%%%%%%%%%
%We have obtained  a local  convergence result to  Newton's method for solving  generalized equation in Banach spaces and  under the majorant condition. The majorant condition  allow to unify several  convergece results pertaining to  Newton's method. In  this sense, it would be interesting to study this method using the regularity metric; see \cite{DontchevRockafellar2009}. Besides, the study of  semi local convergence and inexact versions  of this method would be welcome.
%\bibliographystyle{abbrv}


\begin{thebibliography}{10}

\bibitem{BlumSmale1998}
L.~Blum, F.~Cucker, M.~Shub, and S.~Smale.
\newblock {\em Complexity and real computation}.
\newblock Springer-Verlag, New York, 1998.
\newblock With a foreword by Richard M. Karp.

\bibitem{chang2015}
D.~C. Chang, J.~Wang, J.~C. Yao
\newblock Newton’s method for variational inequality problems: Smale’s point estimate theory under the $\gamma$-condition.
\newblock {\em Applicable Analysis}, 94(1):44--55, 2015.

\bibitem{Daniel1973}
J.~W. Daniel.
\newblock Newton's method for nonlinear inequalities.
\newblock {\em Numer. Math.}, 21:381--387, 1973.

\bibitem{DokovDontchev1998}
S.~P. Dokov, A.~L. Dontchev.
\newblock Robinson's strong regularity implies robust local convergence
              of {N}ewton's method
\newblock {\em Optimal control ({G}ainesville, {FL}, 1997)}, vol. 15 of Appl. Optim., pages 116--129
\newblock {Kluwer Acad. Publ., Dordrecht}, 1998.


\bibitem{Dontchev1996}
A.~L. Dontchev.
\newblock Local analysis of a {N}ewton-type method based on partial linearization.
\newblock In The mathematics of numerical analysis ({P}ark {C}ity, {UT},1995).
\newblock {\em Amer. Math. Soc.}, Providence, RI, 1996.


\bibitem{Dontchev1996_2}
A.~L. Dontchev.
\newblock Local convergence of the {N}ewton method for generalized
              equations
\newblock {\em C. R. Acad. Sci. Paris S\'er. I Math.}, 322(4):327--331, 1996.

\bibitem{DontchevRockafellar2009}
A.~L. Dontchev and R.~T. Rockafellar.
\newblock {\em Implicit functions and solution mappings}.
\newblock Springer Monographs in Mathematics. Springer, Dordrecht, 2009.
\newblock A view from variational analysis.

\bibitem{DontchevRockafellar2010}
A.~L. Dontchev and R.~T. Rockafellar.
\newblock Newton's method for generalized equations: a sequential
              implicit function theorem.
\newblock{\em Math. Program.}, 123(1, Ser.B):139--159,2010.

	
\bibitem{Ferreira2009}
O.~Ferreira.
\newblock Local convergence of {N}ewton's method in {B}anach space from
 the viewpoint of the majorant principle.
\newblock {\em IMA J. Numer. Anal.}, 29(3)746--759, 2009.

\bibitem{FerreiraGoncalvesOliveira2011}
O.~P. Ferreira, M.~L.~N. Gon{\c{c}}alves, and P.~R. Oliveira.
\newblock Local convergence analysis of the {G}auss-{N}ewton method
             under a majorant condition.
\newblock {\em J. Complexity.}, 27(1):111--125, 2011.



\bibitem{FerreiraSilva}
O.~P. Ferreira, G.~N. Silva.
\newblock Inexact Newton's method to nonlinear functions with values in a cone.
\newblock {\em arXiv:1510.01947 }, 2015.


\bibitem{FerreiraSvaiter2009}
O.~P. Ferreira and B.~F. Svaiter.
\newblock Kantorovich's majorants principle for {N}ewton's method.
\newblock {\em Comput. Optim. Appl.}, 42(2):213--229, 2009.

\bibitem{silva2016}
G.~N. Silva.
\newblock Kantorovich’s theorem on Newton’s method for solving generalized equations under the majorant condition.
\newblock {\em Applied Mathematics and Computation}, 286:178--188, 2016.

\bibitem{josephy1979}
M.~Josephy.
\newblock Newton's Method for Generalized Equations and the PIES Energy Model.
\newblock {\em University of Wisconsin--Madison }, 1979.


\bibitem{PietrusJean2013}
A.~Pietrus and C.~Jean-Alexis.
\newblock Newton-secant method for functions with values in a cone.
\newblock {\em Serdica Math. J.}, 39(3-4):271--286, 2013.


\bibitem{Rall1974}
L.~B. Rall.
\newblock A Note on the Convergence of Newton's Method.
\newblock {\em SIAM Journal on Numerical Analysis }, 11(1):pp. 34--36, 1974.



\bibitem{Robinson1972_2}
S.~M. Robinson.
\newblock Extension of {N}ewton's method to nonlinear functions with values in
  a cone.
\newblock {\em Numer. Math.}, 19:341--347, 1972.

%\bibitem{Robinson1980}
%S.~M. Robinson.
%\newblock Strongly regular generalized equations.
%\newblock {\em Math. Oper. Res.}, 5(1):43--62, 1980.


\bibitem{Smale1986}
S.~Smale.
\newblock Newton’s Method Estimates from Data at One Point.
\newblock The Merging of Disciplines: New Directions in Pure, Applied, and Computational Mathematics
\newblock {\em Springer New York}, pages 185--196, 1986.


\bibitem{Traub1979}
J.~F. Traub, H. ~Wo{\'z}niakowski.
\newblock Convergence and complexity of {N}ewton iteration for operator
              equations.
\newblock {\em J. Assoc. Comput. Mach.}, 26(2):250--258, 1979.

\bibitem{Uko1996}
L.~U. Uko.
\newblock Generalized equations and the generalized {N}ewton method
\newblock {\em Math. Programming}, 73(3, Ser. A):251--268, 1996.

\bibitem{UkoArgyros2009}
L.~U. Uko, I. ~K. Argyros.
\newblock Generalized equations, variational inequalities and a weak Kantorovich theorem
\newblock {\em Numerical Algorithms}, 52(3):321--333, 2009.


\bibitem{Wang2015}
J.~Wang.
\newblock Convergence ball of {N}ewton's method for generalized equation
              and uniqueness of the solution
\newblock {\em J. Nonlinear Convex Anal.}, 16(9):1847--1859, 2015.

\bibitem{Wang1999}
X.~Wang.
\newblock Convergence of {N}ewton's method and inverse function theorem in {B}anach space
\newblock {\em Math. Comp.}, 68(225):169--186, 1999.

\bibitem{Zabrejko1987}
P.~P. Zabrejko, D. ~F. Nguen
\newblock The majorant method in the theory of {N}ewton-{K}antorovich
              approximations and the {P}t\'ak error estimates
\newblock {\em Numer. Funct. Anal. Optim.}, 9(5-6):671--684, 1987.


\end{thebibliography}
\end{document}